\theoremstyle{plain}
\newtheorem{thm}{Theorem}[section]
\newtheorem{lem}[thm]{Lemma}
\newtheorem{cor}[thm]{Corollary}
\newtheorem{conj}[thm]{Conjecture}
\theoremstyle{definition}
\newtheorem{defi}[thm]{Definition}
\newtheorem{ex}[thm]{Example}
\newtheorem{rmk}[thm]{Remark}
\newcommand{\cL}{\mathcal{L}}
\newcommand{\cI}{\mathcal{I}}
\newcommand{\cT}{\mathcal{T}}
\newcommand{\cM}{\mathcal{M}}
\newcommand{\cC}{\mathcal{C}}
\DeclareMathOperator{\sm}{small}
\DeclarePairedDelimiter{\floor}{\lfloor}{\rfloor}
\title{Extremal Threshold Graphs for Matchings and Independent Sets}
\author{L. Keough}
\author{A.J. Radcliffe}
\begin{document}

\maketitle

\begin{abstract}
    Many extremal problems for graphs have threshold graphs as their extremal examples. For instance the current authors proved that for fixed $k\ge 1$, among all graphs on $n$ vertices with $m$ edges, some threshold graph has the fewest matchings of size $k$; indeed either the lex graph or the colex graph is such an extremal example. In this paper we consider the problem of maximizing the number of matchings in the class of threshold graphs. We prove that the minimizers are what we call \emph{almost alternating threshold graphs}.

We also discuss a problem with a similar flavor: which threshold graph has the fewest independent sets. Here we are inspired by the result that among all graphs on $n$ vertices and $m$ edges the lex graph has the most independent sets.
\end{abstract}

\section{Introduction} 
\label{sec:introduction}

In \cite{Matchings} the current authors proved the following theorem about the number of matchings in a graph with $n$ vertices and $e$ edges. 

\begin{thm}\label{thm:KR}
	If $G$ is a graph on $n$ vertices and $e$ edges then 
	\[
		m_k(G) \ge \min( m_k(\cL(n,e)), m_k(\cC(n,e))),
	\]
	where $\cL(n,e)$ and $\cC(n,e)$ are the lex graph and colex graph respectively and $m_k(G)$ is the number of matchings of size $k$ in $G$.
\end{thm}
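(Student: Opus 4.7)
The plan is to prove the theorem in two stages: first reduce to threshold graphs, then identify the minimizer among all threshold graphs with the prescribed parameters. Recall that threshold graphs are exactly those with no induced $2K_2$, $C_4$, or $P_4$, and equivalently those built from the empty graph by iteratively adding either an isolated or a dominating vertex.

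For the first stage, I would use a local swap/compression argument. Suppose $G$ is not a threshold graph, so $G$ contains one of the forbidden induced subgraphs on some four vertices $a,b,c,d$. In each case (for instance, $ab,cd \in E(G)$ with no other edges among $\{a,b,c,d\}$ in the $2K_2$ case) I would consider replacing one edge by another, producing a graph $G'$ with the same $n$ and $e$. Expanding $m_k(G) - m_k(G')$ by conditioning on which of the swapped edges appears in a given matching, the difference becomes a signed sum of $(k{-}1)$- and $(k{-}2)$-matching counts in subgraphs obtained by deleting vertices in $\{a,b,c,d\}$. By choosing the swap direction appropriately, or by applying a symmetrization between two opposite swaps, one should be able to guarantee $m_k(G') \le m_k(G)$, thus reducing to the threshold case.

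For the second stage, I would exploit the binary-sequence representation: every threshold graph on $n$ vertices corresponds to a word $b_1 b_2 \cdots b_n \in \{0,1\}^n$ where $b_i = 1$ means the $i$-th vertex is dominating at the time it is added and $b_i = 0$ means it is isolated. The number of edges and the value of $m_k$ can both be expressed recursively via this word, using the standard identity $m_k(G) = m_k(G - v) + \sum_{u \sim v} m_{k-1}(G - v - u)$ applied to the last added vertex. Among all binary words of length $n$ yielding exactly $e$ edges, $\cL(n,e)$ and $\cC(n,e)$ correspond to the two extreme placements of the 1's (all early versus all late, roughly). I would then show by a further local bit-swap argument that $m_k$ restricted to threshold graphs is minimized at one of these two extremes.

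The main obstacle is the first stage: the inclusion-exclusion identity for $m_k(G) - m_k(G')$ contains terms of opposite signs, and verifying that the net contribution has the right sign requires careful bookkeeping and possibly a clever averaging between swap and anti-swap. A secondary difficulty is that neither $\cL(n,e)$ nor $\cC(n,e)$ is always the global minimum; which one wins depends on $n$, $e$, and $k$, so the second stage must genuinely track both candidates rather than prove that a single one dominates.
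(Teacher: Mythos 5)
The two-stage structure you describe --- reduce to threshold graphs, then compare within threshold graphs --- is indeed the shape of the argument, and the paper itself only sketches this (the full proof is in the cited reference \cite{Matchings}, described here as proceeding by ``compression''). But your Stage 1 mechanism differs from the compression used there in a way that matters. You propose to repair one forbidden induced $2K_2$, $P_4$, or $C_4$ at a time by a single edge swap. If the swap is, say, $G' = G - ab + ac$, then conditioning on the swapped edges gives
\[
m_k(G) - m_k(G') \;=\; m_{k-1}\bigl((G-a)-b\bigr) \;-\; m_{k-1}\bigl((G-a)-c\bigr),
\]
and this sign is genuinely not under control in a general graph: $b$ and $c$ may have incomparable neighborhoods in $G-a$, so neither swap direction is guaranteed to help, and ``averaging'' the two directions just returns you to $G$. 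You have correctly identified this as the main obstacle, but it is not a bookkeeping issue --- it is the reason the argument in \cite{Matchings} and \cite{CR} is not phrased this way. The compression used there is a vertex-level move: for two vertices $u,v$ with incomparable neighborhoods, it transfers \emph{all} of the edges in $N(v)\setminus N(u)$ from $v$ to $u$ at once, producing a graph in which $N(v)\subseteq N(u)$. That wholesale move admits a clean explicit injection from $k$-matchings of the compressed graph into $k$-matchings of the original (in the same spirit as the injections $\phi$ built via a replacement map $r$ in Lemmas \ref{abmatchings}--\ref{bracketedstring} of this paper), sidestepping the sign problem entirely. It also strictly increases a potential such as $\sum_v d(v)^2$, which gives termination at a threshold graph --- something your one-edge-at-a-time swap does not obviously provide, since a single repair can create new forbidden induced subgraphs and need not make monotone progress.

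Your Stage 2 is described only in outline (bit-swaps on the binary code), which is reasonable in spirit, but as you note yourself the conclusion is not that one of lex/colex dominates --- the theorem's $\min$ is essential, and any bit-swap argument has to produce a monotone chain that terminates at one of the two extremes without ever committing to which one. The paper gives no details on this stage either, so I cannot compare further, but the honest status of your proposal is: correct skeleton, genuine gap in Stage 1 that compression is specifically designed to close, and Stage 2 unverified.
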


The proof proceeds by showing that for any graph $G$ there is a threshold graph $T$ having the same number of vertices and edges as $G$ with $m_k(G)\ge m_k(T)$. Thus some $k$-matching minimizer among all graphs of given order and size is a threshold graph. 

Similarly, it is a  standard result, indeed a straightforward corollary of the Kruskal-Katona theorem (\cite{Kruskal}, \cite{Katona}), that the lex graph $\cL(n,e)$ has the largest number of independent sets for its order and size. Cutler and the second author \cite{CR} discuss this theorem and also prove that for any graph $G$ there is a threshold graph on the same number of vertices and edges having at least as many independent sets. This proof (and that of Theorem \ref{thm:KR}) proceed by ``compressing'' the graph so that it looks more and more like a threshold graph, at every stage not increasing the number of matchings and not decreasing the number of independent sets. 

These examples raise the interesting question of how many matchings we can preserve and still be threshold. Similarly, how many independent sets are forced to exist in a threshold graph? We solve these problems in this paper.

The answer to the second question is simple to state; in Section \ref{sec:ind} we prove that the colex graph has the fewest independent sets among threshold graphs on a given number of vertices and edges. The first question has a more subtle answer. In Section \ref{sec:results} we introduce the notion of an \emph{almost alternating} threshold graph, and in Section \ref{sec:maxmatch} we show that the threshold graphs on $n$ vertices with $e$ edges having the most matchings are exactly the almost alternating threshold graphs.


\section{Definitions and Results}\label{sec:results}

Threshold graphs have made many  appearances as the solutions to extremal problems for graphs of fixed order and size. (See for instance \cite{KatonaAhlswede} for the case $k=2$ of Theorem~\ref{thm:KR}; \cite{PeledEtAl} for  a discussion of the cases  of equality, and \cite{ChHam} for a very early application of threshold graphs to set packing problems.) 

There are many equivalent definitions for threshold graphs. (See \cite{OrangeBook} for an extensive discussion of many aspects of threshold graphs.)  One that will be particularly useful to us is as follows.  
\begin{defi}
A \emph{threshold graph} is a graph that can be constructed from a single vertex by adding vertices one at a time that are either isolated or dominating.  Let $\cT_{n,e}$ be the set of all threshold graphs having $n$ vertices and $e$ edges.
\end{defi}

Using this definition any binary string of finite length can be used as instructions to construct a threshold graph.  Letting 1 be the code for a dominating vertex and  0 be the code for an isolated vertex, construct the threshold graph by reading the binary string from right to left.  (With this convention the first digit of the code corresponds to the last vertex added and is the ``most significant".  An initial $1$ is a dominating vertex in the graph, for instance.) The threshold graph with binary string $\sigma$ will be denoted $T(\sigma)$.  We will refer to $\sigma$ as the code of the graph.

\begin{ex}  The graph in Figure \ref{fig:threshold} is threshold with code $0010010$.
\label{thresholdex}
\begin{figure}[!ht]
\begin{center}
\begin{tikzpicture}
    \coordinate (0) at (0,0) ;
    \coordinate (1) at (1,0) ;
    \coordinate (2) at (2,0) ;
    \coordinate (3) at (3,0) ;
    \coordinate (4) at (4,0) ;
    \coordinate (5) at (5,0) ;
    \coordinate (6) at (6,0) ;

	\draw (0,-.5) node {$0$}; 
	\draw (1,-.5) node {$0$}; 
	\draw (2,-.5) node {$1$}; 
	\draw (3,-.5) node {$0$}; 
	\draw (4,-.5) node {$0$}; 
	\draw (5,-.5) node {$1$}; 
	\draw (6,-.5) node {$0$}; 
   
    \fill (0) circle (3pt) ;
    \fill (1) circle (3pt) ;
    \fill (2) circle (3pt) ;
    \fill (3) circle (3pt) ;
    \fill (4) circle (3pt) ;
    \fill (5) circle (3pt) ;
    \fill (6) circle (3pt) ;

   \tikzstyle{EdgeStyle}=[bend left]

   \Edge(2)(6)
   \Edge(2)(5)
   \Edge(2)(4)
   \Edge(2)(3)
		
   \Edge(5)(6)
\end{tikzpicture}

\caption{$T(0010010)$}
\label{fig:threshold}
\end{center}
\end{figure}
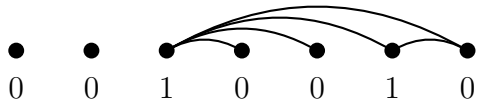

\end{ex}

Note that $T(0010010)$ and $T(0010011)$ are the same graph because whether the first vertex added to the graph is isolated or dominating does not affect the outcome.  To deal with this lack of uniqueness, a $*$ will be used to denote the first (rightmost) vertex as it can be read as either a 0 or a 1. With this convention the graph in Figure \ref{fig:threshold} will be denoted  $T(001001*)$.

One of the substructures we are concerned with is matchings.  A \emph{matching} is a set of independent edges.  Let $\cM(G)$ denote the set of matchings in $G$ and $m(G) = |\cM(G)|$.  

\begin{ex}
Let $G=T(001001*)$, shown in Figure \ref{fig:ThresholdMatching}.  The bold edges $v_5v_3$ and $v_2v_1$ form a matching of size 2, and there is one other $2$-matching. 
The empty set and single edges are also matchings.  Thus $m(G)=8$. 
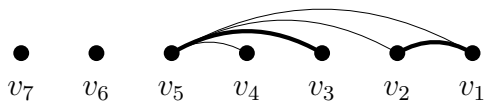
\begin{figure}[!ht]
\begin{center}
\begin{tikzpicture}
    \coordinate (0) at (0,0) ;
    \coordinate (1) at (1,0) ;
    \coordinate (2) at (2,0) ;
    \coordinate (3) at (3,0) ;
    \coordinate (4) at (4,0) ;
    \coordinate (5) at (5,0) ;
    \coordinate (6) at (6,0) ;

	\draw (0,-.5) node {$v_7$}; 
	\draw (1,-.5) node {$v_6$}; 
	\draw (2,-.5) node {$v_5$}; 
	\draw (3,-.5) node {$v_4$}; 
	\draw (4,-.5) node {$v_3$}; 
	\draw (5,-.5) node {$v_2$}; 
	\draw (6,-.5) node {$v_1$};  
   
    \fill (0) circle (3pt) ;
    \fill (1) circle (3pt) ;
    \fill (2) circle (3pt) ;
    \fill (3) circle (3pt) ;
    \fill (4) circle (3pt) ;
    \fill (5) circle (3pt) ;
    \fill (6) circle (3pt) ;

   \tikzstyle{EdgeStyle}=[bend left,ultra thin]

   \Edge(2)(6)
   \Edge(2)(5)
   
   \Edge(2)(3)

   \tikzstyle{EdgeStyle}=[bend left, ultra thick]
   \Edge(5)(6)
   \Edge(2)(4)
   
\end{tikzpicture}
\caption{A matching in $G = T(001001*)$.}
\label{fig:ThresholdMatching}
\end{center}
\end{figure}

\end{ex}

Our analysis of the maximum matchings problem for threshold graphs leads us to consider graphs that satisfy a certain condition on their codes. 
\begin{defi}
A threshold graph is \emph{alternating} if its code is a block of $1$'s or a block of $0$'s, followed by an alternating string of $1$'s and $0$'s.
\end{defi}

There are many values of $n$ and $e$ for which there is no alternating threshold graph on $n$ vertices with $e$ edges.  We introduce an extended class that covers the remaining possibilities.
 
\begin{defi}
Let $a$  denote $01$ and $b$ denote $10$.  A threshold graph is \emph{almost alternating} if its code can be written as a block of 1's or 0's followed by a string of $a$'s and $b$'s.  We may consider the $*$ to be a $0$ or a $1$ and use it as part of an $a$ or a $b$.  We are also permitted to keep the star separate.  In the event that the $*$ appears in the $a,b$ representation of the graph, then the almost alternating graph will be called \emph{starred}.  Otherwise we call the graph \emph{unstarred}.  Also, we distinguish between \emph{small} almost alternating threshold graphs whose starting block consists only of $0$'s (or is empty) and  \emph{large} ones, whose starting block consists of $1$'s. Clearly alternating graphs are almost alternating.
\end{defi}

\begin{ex}  The following are examples of codes of almost alternating threshold graphs.
\begin{enumerate}
\item[(i)]  $00001011001*=000aaba*$ is small and starred 
\item[(ii)]  $11101100*=111aba$ is  large and unstarred
\item[(iii)]  $0010101*=0aaa*=00bbb$ is small and alternating
\end{enumerate}

The last of these three examples demonstrates that representations of almost alternating threshold graphs using $a$'s and $b$'s are not unique when the graph is alternating.  In fact, this is the only case in which there are two representations using $a$'s and $b$'s.

\end{ex}

Our first main theorem, Theorem \ref{thm:MaxMatchings}, gives results about maximizing $m(G)$.

\begin{thm}\label{thm:MaxMatchings}
If $A$ is an almost alternating threshold graph, and $G$ is a threshold graph with the same number of vertices and edges then $m(G)\leq m(A)$ and if $G$ is not almost alternating then $m(G) < m(A)$.
\end{thm}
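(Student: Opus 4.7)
The plan is to prove Theorem~\ref{thm:MaxMatchings} by an exchange argument on the binary codes of threshold graphs. Fix $n$ and $e$ and identify each $T \in \cT_{n,e}$ with its one-set $A = \{i : \sigma_i = 1\} \subseteq \{2,\dots,n\}$, so that $e = \sum_{a \in A}(a-1)$. Each matching of $T(A)$ is specified by the set $J = \{j_1 < \cdots < j_k\} \subseteq A$ of its larger-index endpoints together with an injection $\phi\colon J \to \{1,\dots,n\}\setminus J$ satisfying $\phi(j)<j$ for each $j \in J$. Counting these injections directly gives
\[ m(T(A)) \;=\; \sum_{J \subseteq A}\, \prod_{i=1}^{|J|} \max\bigl(0,\; j_i - 2i + 1\bigr), \]
which will be the main computational tool throughout.

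First I would prove a \emph{neutral move} lemma: swapping the substring $1001$ for $0110$ at any four consecutive code positions preserves both $(n,e)$ and $m$. Edge preservation is immediate from the positional weights. To show $m$ is preserved I would exhibit a local bijection obtained by conditioning matchings on their restriction to the four affected vertices and transferring the remaining matching data across the move; this works because those four vertices interact with the rest of the graph in the same way on both sides of the swap. Since $1001 \leftrightarrow 0110$ is exactly the swap $ba \leftrightarrow ab$ of adjacent pairs in the $a,b$-representation, iterating this move together with the $0aaa* = 00bbb$-type change-of-representation identity shows that all almost alternating graphs on $(n,e)$ share a common value of $m$.

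The central step is the \emph{improvement lemma}: if $T \in \cT_{n,e}$ is not almost alternating, then there exists $T' \in \cT_{n,e}$ with $m(T') > m(T)$. I plan to locate in the code of $T$ a local obstruction to pair-parsing after the initial $0$- or $1$-block and then exhibit an explicit edge-preserving modification that removes it. The candidate moves are the three-position split/merge $\{i\} \leftrightarrow \{j,k\}$ with $j + k = i + 1$, which shifts one dominating vertex into two lower-index ones or vice versa, together with non-local $1001$-style swaps across pair boundaries. Using the formula above, I would verify $m(T') - m(T) > 0$ by grouping subsets $J$ according to their intersection with the changed positions: already in the base case $A_T = \{i\}$, $A_{T'} = \{j,k\}$ with $j<k$, the difference works out to exactly $(j-1)(k-3)_+$, and the general case admits an analogous expression whose sign is controlled by the positions.

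The hard part will be this case analysis, because the direction of improvement (split versus merge) depends on the specific positions: for $(n,e)=(5,4)$ one must split $\{5\} \to \{2,4\}$ (raising $m$ from $5$ to $6$), while for $(n,e)=(6,10)$ one must merge $\{3,4\}$ inside $\{2,3,4,5\}$ into $\{6\}$ to obtain $\{2,5,6\}$ (raising $m$ from $26$ to $30$). I expect the cases to split naturally according to whether the initial block of the code consists of $0$'s or $1$'s, and within each, according to the first pair-boundary at which the code fails to split into $a$'s and $b$'s; the structure at that boundary will dictate which swap strictly improves $m$. Once the improvement lemma is in hand, iterating it together with the neutral moves forces every maximizer of $m$ on $\cT_{n,e}$ to be almost alternating, completing the proof.
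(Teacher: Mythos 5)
Your matching-count formula
\[
m(T(A)) = \sum_{J \subseteq A}\ \prod_{i=1}^{|J|} \max\bigl(0,\; j_i - 2i + 1\bigr)
\]
is correct (and not used in the paper; it is a genuinely different computational handle on the problem), and your neutral move is exactly the paper's $ab$-switch (Lemma~\ref{abmatchings}, proved there via an explicit size-preserving injection on matchings in each direction). Where your proposal stops short of a proof is precisely at the place you flag as ``the hard part'': you do not actually establish the improvement lemma, which is the entire content of the theorem. You observe that the sign of $m(T')-m(T)$ under your proposed split/merge move $\{i\}\leftrightarrow\{j,k\}$ depends on context, give two examples pointing in opposite directions, and then defer the case analysis. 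You also do not supply the structural input that tells you \emph{which} failure of pair-parsing must occur in a non-almost-alternating code and \emph{which} move to apply where, so there is no argument that your candidate moves (split/merge plus ``non-local $1001$-style swaps'') actually cover every non-almost-alternating graph. As written this is a plan with the central lemma missing, not a proof.

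For comparison, the paper closes exactly this gap with a characterization result (Lemma~\ref{almostalternating}: a code is almost alternating iff it has neither a \emph{bracketed string} nor a \emph{separation issue}) and then one edge-swap move for each obstacle. Both of the paper's moves are of the $ab$-switch type and are shown to strictly increase $m$ via an explicit non-surjective, size-preserving injection $\cM(G)\hookrightarrow\cM(G')$; the separation-issue case is reduced by induction (using neutral $ab$-switches) to the bracketed-string case. Notably the paper never needs a merge/split that changes $|A|$, only moves of the form $01\tau10\rho\leftrightarrow10\tau01\rho$ (Lemma~\ref{abvertedge}), which keep the analysis uniform and sidestep the sign ambiguity you ran into. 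If you want to push your computational approach through, you would at minimum need (i) a clean dichotomy, analogous to the paper's Lemma~\ref{almostalternating}, for how pair-parsing fails, and (ii) a proof that for each failure type some edge-preserving move makes your sum strictly larger; until then the claim $m(G)<m(A)$ for $G$ not almost alternating is unsupported.
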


The proof of Theorem \ref{thm:MaxMatchings} appears in Section \ref{proof:maxmatchings}.  

\begin{rmk}
Typically, there are many almost alternating graphs having $n$ vertices and $e$ edges all having the same number of matchings, since neither the number of matchings nor the number of edges depends on the order of $a$'s and $b$'s (see Corollary \ref{cor:SameNumber}).   Also, it is important to note that it is not the case that $G\in\cT_{n,e}$ attains the maximum number of matchings of size $k$ in $\cT_{n,e}$ \emph{if and only if} $G$ is almost alternating.  For instance, all graphs in $\cT_{n,e}$ have $e$ $1$-matchings.  Also, if the relevant almost alternating graph has no $k$-matchings then we can't have strict inequality.  For example consider $G = T(1000111*)$ and $G' =T(1010100*)$. While $G'$ is almost alternating, $G$ is not, and they each have $8$ vertices, $13$ edges, and no matchings of size $4$.
\end{rmk}

In another direction we consider the problem of minimizing the number of independent sets in threshold graphs.  An \emph{independent set} is a subset of the vertices with no edge between any pair.  Let $\cI(G)$ denote the set of independent sets in $G$ and $i(G) = |\cI(G)|$.  Similarly, let $\cI_k(G)$ denote the set of independent sets in $G$ of size $k$ (those having $k$ vertices) and let $i_k(G) = |\cI_k(G)|$.

The \emph{colex graph} can be defined using the colex ordering.  Let $[n] = \{1,2,\dots, n\}$.  We define the \emph{colex ordering} by $A<_C B$ if and only if $\max(A\Delta B) \in B$ where $A\Delta B$ denotes the symmetric difference between $A$ and $B$.  Restricting this ordering to $\binom{[n]}{2}$, the subsets of $[n]$ of size $2$, we get an ordering on $E(K_n)$, the edge set of the complete graph on $n$ vertices.  The first few edges in this ordering are 
\[ \{1,2\}, \{1,3\}, \{2,3\}, \{1,4\}, \{2,4\}, \{3,4\}, \{1,5\}, \dots\]
The colex graph having $n$ vertices and $e$ edges has vertex set $[n]$ and edge set the first $e$ edges in the colex ordering.  Throughout we write $\cC(n,e)$ to mean the colex graph having $n$ vertices and $e$ edges. The colex graph is a threshold graph. In Section \ref{sec:colex}  we provide an alternative characterization of colex graphs using the binary code representation.  The following theorem establishes that the colex graph minimizes independent sets among all threshold graphs.

\begin{thm}\label{thm:MinIndSets}
Let $G$ be a threshold graph with $n$ vertices and $e$ edges  and let $\cC(n,e)$ be the colex graph having $n$ vertices and $e$ edges.  If $G$ is not $\mathcal{C}(n,e)$ then for all $k$,
	\[i_k(\mathcal{C}(n,e)) \leq i_k(G) \quad\text{and}\quad i(\mathcal{C}(n,e)) < i(G).\]
\end{thm}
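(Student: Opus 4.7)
The plan is to encode every $G\in\cT_{n,e}$ by a certain $z$-sequence, derive clean formulas for $i_k(G)$ and $i(G)$ in terms of it, and then prove that $z(\cC(n,e))$ is the minimum element in the majorization order among all admissible sequences; Karamata's inequality will then deliver the conclusion for any convex cost.

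For a threshold graph $G$ with code $c_n c_{n-1}\cdots c_2\,*$, define $z_i(G) = |\{j : i<j\le n,\; c_j=0\}|$ for $i=1,\ldots,n$. The key observation is that every nonempty independent set $S$ in $G$ has a unique earliest-added vertex $v_i\in S$, and any later $v_j\in S$ necessarily has $c_j=0$ (otherwise $v_j$ was added as a dominating vertex and hence is joined to $v_i$); conversely every subset of $\{v_j : j>i,\ c_j=0\}$ united with $v_i$ is independent. This yields the enumeration
\[
i_k(G) \;=\; \sum_{i=1}^n \binom{z_i(G)}{k-1} \quad (k\ge 1), \qquad i(G) \;=\; 1+\sum_{i=1}^n 2^{z_i(G)}.
\]
The sequence $(z_i(G))$ is weakly decreasing with consecutive differences in $\{0,1\}$ and $z_n(G) = 0$, and $\sum_i z_i(G) = \binom{n}{2}-e$ (the number of non-edges), which depends only on $n$ and $e$.

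Since $x\mapsto\binom{x}{k-1}$ and $x\mapsto 2^x$ are convex on nonnegative integers (strictly so for $k\ge 3$ and for $2^x$), and the sum $\sum_i z_i$ is fixed, Karamata's inequality reduces the theorem to showing that for every $G\in\cT_{n,e}$ the sequence $z(\cC(n,e))$ is majorized by $z(G)$. Using the binary-code characterization from Section~\ref{sec:colex} I would identify $z(\cC(n,e))$ as the most balanced admissible sequence: its set $U_{\cC}=\{j:c_j=0\}$ of $0$-positions takes the form $\{j^*\}\cup\{n-M+2,n-M+3,\ldots,n\}$, where $M$ is the minimum number of $0$'s realizing the weight $\binom{n}{2}-e$ via $\sum_{j\in U}(j-1)$, and $j^*\in\{2,\ldots,n-M+1\}$ is the unique adjustment position making the total correct.

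The main obstacle is the majorization itself. Using the identity $\sum_{i=1}^k z_i(G) = \sum_{j\in U}\min(k,j-1)$, the claim becomes that $U_{\cC}$ minimizes $\sum_{j\in U}\min(k,j-1)$ for every $k$ simultaneously, over all $U\subseteq\{2,\ldots,n\}$ with $\sum_{j\in U}(j-1)=\binom{n}{2}-e$. I would prove this by an exchange argument on $U$. Two moves preserve the total weight: a \emph{spread} $\{j_1,j_2\}\to\{j_1-1,j_2+1\}$ when $j_1-1$ and $j_2+1$ both lie in $\{2,\ldots,n\}\setminus U$, and a \emph{generalized merge} $\{j_1,j_2\}\to\{j_1+j_2-1\}$ when $j_1+j_2-1\in\{2,\ldots,n\}\setminus U$. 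A direct computation using the concavity of $x\mapsto\min(k,x)$ shows that each move weakly decreases $\sum_{j\in U}\min(k,j-1)$ for every $k$, and strictly decreases it for at least one $k$. The hard step is a finite case analysis establishing that any $U$ admitting neither move must equal $U_{\cC}$; once granted, iteration transports any $U\ne U_{\cC}$ to $U_{\cC}$ while monotonically decreasing every partial sum of the $z$-sequence, yielding $z(\cC(n,e))\prec z(G)$. Karamata then gives $i_k(\cC(n,e))\le i_k(G)$ for all $k$, and strict convexity of $2^x$ gives $i(\cC(n,e))<i(G)$ whenever $G\ne\cC(n,e)$.
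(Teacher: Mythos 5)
Your approach is genuinely different from the paper's. The paper works directly with two local code rewrites ($10^j1\to 010^{j-2}10$ and $101^j01\to 01^{j+2}0$), constructing explicit injections $\cI(G')\setminus\cI(G)\to\cI(G)\setminus\cI(G')$ to show each rewrite weakly decreases every $i_k$ and strictly decreases $i$, and then closing with a ``smallest code among minimizers'' argument. You instead derive the closed formulas $i_k(G)=\sum_i\binom{z_i}{k-1}$ and $i(G)=1+\sum_i 2^{z_i}$ from the ``earliest vertex in an independent set'' decomposition (these are correct: the $z$-sequence is weakly decreasing, has steps in $\{0,1\}$, ends at $0$, and has sum $\binom n2-e$), and then reduce the whole theorem to a single majorization statement $z(\cC(n,e))\prec z(G)$, finishing with Karamata. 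This is a cleaner and more informative framing: it immediately explains why $i_1$ and $i_2$ cannot distinguish threshold graphs (linearity of $\binom{x}{0},\binom x1$), and it isolates the only real combinatorial content. It is worth noting that, translated into your $U$-language, both of the paper's rewrites are \emph{spreads}; your merge is an extra move the paper does not need because its code rewrites are chosen to always be available.

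There is, however, a genuine gap: you explicitly defer the crucial step---that any admissible $U\ne U_\cC$ admits a spread or a merge---to an unperformed ``finite case analysis.'' Without it the argument does not close. The good news is that this step is not hard and needs no case explosion. Write $U$ as a maximal suffix $\{n-a+1,\dots,n\}$ (possibly empty, with $n-a\notin U$) together with $U'=U\cap\{2,\dots,n-a-1\}$; the colex characterization (at most one $0$ after the first $1$) says exactly $|U'|\le 1$, so if $U\ne U_\cC$ then $|U'|\ge 2$. Let $j_1=\min U'$ and $j_2=\max U'$. Then $j_2+1\le n-a\le n$ and $j_2+1\notin U$ (either $j_2+1<n-a$, in which case it is neither in $U'$ nor in the suffix, or $j_2+1=n-a\notin U$). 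If $j_1\ge 3$, then $j_1-1\ge 2$ and $j_1-1\notin U$ (it is below $\min U'$ and below the suffix), so the spread $\{j_1,j_2\}\to\{j_1-1,j_2+1\}$ is available. If $j_1=2$, the merge $\{2,j_2\}\to\{j_2+1\}$ is available. Each move strictly increases $\sum_{j\in U}(j-1)^2$, which is bounded, so the process terminates, and since each move weakly decreases every partial sum of $z$ and strictly decreases at least one, termination must occur at $U_\cC$. With this argument supplied your proof is complete; strict Karamata for $2^x$ then gives $i(\cC(n,e))<i(G)$ because the weakly decreasing sequences $z(\cC)$ and $z(G)$ are rearrangements of each other only if they are equal, and $z$ determines the code via $c_{i+1}=z_i-z_{i+1}$.
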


We prove Theorem \ref{thm:MinIndSets} in Section \ref{sec:localmovesindsets}.

\section{Maximizing Matchings}\label{sec:maxmatch}

\subsection{Existence of Almost Alternating Threshold Graphs}  Given integers $n\geq 0$ and $e\leq \binom{n}{2}$, it is not immediately obvious that there exists an almost alternating graph on $n$ vertices with $e$ edges.  We prove this in Corollary \ref{cor:findingcode}.  We first prove some preliminary results.

Given $n\ge 0$ we write $\sm(n)$ for the maximum number of edges in a small almost alternating threshold graph.

\begin{lem}
    For all $n\ge 0$ there is a unique almost alternating graph that is simultaneously the small graph with the most edges and the large graph with the fewest edges. This graph is in fact alternating and the number of edges is
    \[
        \sm(n) = \floor[\Big]{\frac{n^2}4} .
    \]
\end{lem}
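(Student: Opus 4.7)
The plan is to exhibit a specific candidate graph, check that it is both small and large and has the right edge count, and then verify uniqueness on each side by a direct optimization. The candidate is $T(\sigma_n)$ where $\sigma_n$ is the length-$n$ alternating binary string $1010\cdots$ starting with $1$; explicitly, $\sigma_n = (10)^{n/2}$ if $n$ is even and $\sigma_n = (10)^{(n-1)/2}\cdot 1$ if $n$ is odd.

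First I would check directly from the definitions that $\sigma_n$ admits two almost-alternating decompositions: a small one, with empty initial block followed by $b^{\lfloor n/2\rfloor}$ (with $*$ kept separate when $n$ is odd), and a large one, with initial block $1$ followed by $a^{\lfloor(n-1)/2\rfloor}$ (with $*$ kept separate when $n$ is even). Hence $T(\sigma_n)$ is both small and large, and since $\sigma_n$ is itself an alternating string, $T(\sigma_n)$ is alternating.

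The heart of the argument is uniqueness on each side. Parametrize any small almost alternating code as $C = 0^k\cdot w$, where $w$ is a concatenation of $m=\lfloor(n-k)/2\rfloor$ elements from $\{a,b\}$ possibly followed by a separate $*$. Indexing positions $1,\ldots,n$ of the code from the right (so that a $1$ in position $j$ contributes $j-1$ edges), the $i$-th pair of $w$ occupies positions $n-k-2i+2$ and $n-k-2i+1$, and choosing $b$ rather than $a$ there places the pair's sole $1$ one position higher, giving one more edge. Summing,
\[
    e(C) \le \sum_{i=1}^{m}(n-k-2i+1) = m(n-k-m) = \left\lfloor\frac{(n-k)^2}{4}\right\rfloor,
\]
with equality iff every pair is $b$. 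Since $\lfloor(n-k)^2/4\rfloor$ is strictly decreasing in $k$ on $0\le k\le n-2$, the overall small maximum $\lfloor n^2/4\rfloor$ is attained only at $k=0$ with all pairs equal to $b$, which forces the code to be $\sigma_n$.

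A symmetric analysis handles the large codes $1^k\cdot w$ with $k\ge 1$: the initial block contributes $k(2n-k-1)/2$ edges and each pair's contribution is minimized by choosing $a$. A direct calculation in each parity class shows the total is strictly minimized at $k=1$ with all $a$'s, giving $\lfloor n^2/4\rfloor$ and again forcing the code to be $\sigma_n$. Thus $T(\sigma_n)$ is both the unique small maximizer and the unique large minimizer, and their common edge count is $\sm(n)=\lfloor n^2/4\rfloor$. The main obstacle I anticipate is bookkeeping the strict-monotonicity checks (both in $k$ and in the individual pair choices) for the large case in each parity; this is routine but needs care to confirm that incrementing $k$ from $1$ adds more to the block's contribution than one can save by shortening the pair sequence.
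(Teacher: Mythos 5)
Your proof is correct and follows essentially the same route as the paper's: identify the extremal code on each side (empty initial block followed by all $b$'s for small, initial block $1$ followed by all $a$'s for large), observe that these are two decompositions of the same alternating string, and count edges by the position of each $1$. The difference is one of rigor rather than substance: the paper simply declares the form of the optimizers ``clear from the definition of the letters $a$ and $b$,'' whereas you actually prove it via an explicit two-parameter optimization over the block length $k$ and the letter choices, yielding the clean identity $\sum_{i=1}^{m}(n-k-2i+1)=m(n-k-m)=\lfloor(n-k)^2/4\rfloor$. One remark that would save you the parity-by-parity bookkeeping you anticipate on the large side: taking complements flips every bit of the code, hence swaps $a\leftrightarrow b$ and $0\leftrightarrow 1$ while sending $e\mapsto\binom{n}{2}-e$, so the large minimum over codes $1^k w$ with $k\ge 1$ equals $\binom{n}{2}$ minus the small maximum over $0^k\bar w$ with $k\ge 1$; by your small-side analysis that maximum is $\lfloor(n-1)^2/4\rfloor$, attained uniquely at $k=1$ with all $b$'s, giving $\binom{n}{2}-\lfloor(n-1)^2/4\rfloor=\lfloor n^2/4\rfloor$ attained uniquely at $k=1$ with all $a$'s. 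This is exactly the duality the paper exploits later in Corollary~\ref{cor:findingcode}.
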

\begin{proof}
    It is clear from the definition of the letters $a$ and $b$ that the code for the small graph with the most edges has no initial string of $0$'s, has a string of $b$'s of length $\lfloor n/2\rfloor$, and then terminates with $*$ if $n$ is odd. Similarly the code for the smallest large graph starts with a $1$ and then has a sequence of $a$'s of length $\lfloor (n-1)/2 \rfloor$, terminating with a $*$ if $n$ is even. It is easy to check that 
    \[
        bbb\dots bbb = 1aaa\dots aaa{*} \qquad \text{and}\qquad bbb\dots bbb* = 1aaa\dots aaa
    \]
    when the sequences have the same lengths. Counting edges according their left hand end in the code we get for $n$ even,
    \[
        \sm(n) = e(T(bbb\dots bbb)) = (n-1)+(n-3)+\cdots+1 = \frac{n^2}{4},
    \]
    and for $n$ odd,
    \[
        \sm(n) = e(T(bbb\dots bbb{*})) = (n-1)+(n-3)+\cdots+2 = \frac{n^2-1}4.
    \]
\end{proof}

\begin{lem}\label{lem:edgecount}
Defining $s(\alpha,\beta)$ (respectively $s^*(\alpha,\beta)$) to be the number of edges in an  unstarred (respectively starred) small graph with $\alpha$ $a$'s and $\beta$ $b$'s,  then
\[ s(\alpha,\beta) = (\alpha+\beta)^2-\alpha \qquad \text{and} \qquad s^*(\alpha,\beta) = (\alpha+\beta)^2 + \beta.\]
In particular, the number of edges is independent of the positions of the $a$'s and $b$'s.
\end{lem}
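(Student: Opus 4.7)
The plan is to compute $e(G)$ directly by reading off the edge contribution of each $a$- or $b$-letter. The key fact about a threshold graph with code $\sigma = \sigma_n \sigma_{n-1}\cdots \sigma_1$ (indexing positions from the right so that $v_i$ is added at position $i$) is that a dominating vertex $v_i$, i.e.\ one with $\sigma_i = 1$ and $i \ge 2$, contributes exactly $i - 1$ edges to $G$ — one to each previously added vertex — while $v_1$ contributes nothing regardless of whether it is read as $0$ or $1$. Thus
\[
    e(G) \;=\; \sum_{\substack{i \ge 2 \\ \sigma_i = 1}} (i - 1).
\]
The initial block of $0$'s in a small almost alternating code sits in the leftmost (largest-index) positions and carries only $\sigma_i = 0$, so it adds nothing to this sum. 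Hence the edge count is determined entirely by the $a, b$ portion, which already makes the $k$-independence of the formula plausible.

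For the unstarred case, write $N = \alpha + \beta$ and let the $i$-th letter from the right occupy positions $2i - 1$ and $2i$. Since $a$ is the string $01$ read left to right, its right-hand character (at position $2i - 1$) is $1$ and its left-hand character (at position $2i$) is $0$; thus an $a$-letter at position $i$ contributes $2i - 2$ edges. Symmetrically, a $b$-letter puts the $1$ at position $2i$ and contributes $2i - 1$ edges. At $i = 1$ these evaluate to $0$ and $1$ respectively, consistent with the fact that $v_1$ contributes nothing. Since a $b$ contributes exactly one more than an $a$ would in the same slot, summing yields
\[
    s(\alpha, \beta) \;=\; \sum_{i=1}^{N} (2i - 2) \;+\; \beta \;=\; N(N-1) + \beta \;=\; (\alpha + \beta)^2 - \alpha.
\]

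For the starred case the $*$ sits at position $1$ and contributes nothing, so the $i$-th letter from the right now occupies positions $2i$ and $2i + 1$. An identical analysis gives contributions of $2i - 1$ for an $a$ and $2i$ for a $b$, so
\[
    s^*(\alpha, \beta) \;=\; \sum_{i=1}^{N} (2i - 1) \;+\; \beta \;=\; N^2 + \beta \;=\; (\alpha + \beta)^2 + \beta.
\]
Each formula depends only on the counts $\alpha$ and $\beta$, so the number of edges is independent of how the $a$'s and $b$'s are interleaved, as claimed.

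The only real obstacle is bookkeeping: the letters $a = 01$ and $b = 10$ are written in left-to-right order, while the code is parsed right to left, so within a letter it is the odd-indexed position (or the even-indexed position in the starred case) that carries the rightmost character of the pair. Once this convention is pinned down and the $i = 1$ case is checked against the rule that $v_1$ contributes nothing, the proof collapses to the two one-line summations above.
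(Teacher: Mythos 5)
Your proof is correct and takes essentially the same approach as the paper: count the edge contribution of each dominating vertex (each $1$ in the code) according to its position. The only cosmetic difference is that the paper indexes letters from $0$ and then obtains the starred formula from the unstarred one by noting each letter gains exactly one extra edge to the star, whereas you index from $1$ and recompute the starred sum directly; both are the same idea.
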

\begin{proof}
Consider a  small unstarred graph with $\alpha$ $a$'s and $\beta$ $b$'s.   Let  $A$ be the set of positions of $a$'s and $B$ be the set of positions of $b$'s where we index starting with the rightmost letter as $0$. We count the number of edges in this graph by counting the number of edges each vertex labeled $1$ has to its right.  An $a$ in the $i^{th}$ position is incident to $2i$ edges going to the right.  A $b$ in the $i^{th}$ position has $2i+1$ edges going to the right.  Thus, 
\[
s(\alpha,\beta) = \sum_{i\in A} 2i + \sum_{i\in B} (2i+1) = \beta + 2 \sum_{i=0}^{\alpha+\beta-1} i
= (\alpha+\beta)^2 -(\alpha+\beta) + \beta 
=(\alpha+\beta)^2 -\alpha.
\]

For a  small starred graph the number of edges is the number of edges in the corresponding  small unstarred plus the one edge to the star for each letter.  So
\[ s^*(\alpha,\beta) = (\alpha+\beta)^2-\alpha + (\alpha+\beta)= (\alpha+\beta)^2 + \beta.\]
\end{proof}

\begin{cor}\label{cor:codesmall}
Suppose integers $n$ and $e$ are such that $n\geq 0$ and $0\leq e \leq \floor[\big]{\frac{n^2}4}$. Consider where $e$ fits in the sequence 
\[0\le 1\le 2 \le 4 \le 6 \le 9 \le 12 \le \cdots \le k^2 \le k(k+1) \le (k+1)^2 \le \cdots .\]
If $k^2\leq e \leq k(k+1)$ then there is a small starred graph with $n$ vertices and $e$ edges having $\beta = e-k^2$ copies of $b$ and $\alpha =k-\beta$ copies of $a$. Similarly if $k(k+1)\leq e \leq (k+1)^2$ then there is a small unstarred graph with $n$ vertices and $e$ edges having $\alpha = (k+1)^2 - e$ copies of $a$ and $\beta = (k+1)-\alpha$ copies of $b$.  These values of $\alpha$ and $\beta$ are unique except where $e =k^2$ or $e=k(k+1)$ in which case there are exactly two possibilities, $00\cdots 0aa\cdots aaa$ or $00\cdots 00bb\cdots bbb*$, which in fact encode the same threshold graph.
\end{cor}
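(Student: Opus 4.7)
The plan is to use Lemma \ref{lem:edgecount} to read $(\alpha,\beta)$ directly off from $e$, then verify the resulting code fits in $n$ positions by padding with leading zeros. Each $e \in [0, \lfloor n^2/4 \rfloor]$ lies in a unique subinterval $[k^2, k(k+1)]$ or $[k(k+1), (k+1)^2]$, with overlap only at endpoints. In the first subinterval I set $\alpha+\beta = k$, $\beta = e-k^2 \in [0,k]$, and $\alpha = k-\beta$; Lemma \ref{lem:edgecount} then gives $s^*(\alpha,\beta) = k^2+\beta = e$ for a starred graph. In the second I set $\alpha+\beta = k+1$, $\alpha = (k+1)^2-e \in [0,k+1]$, and $\beta = k+1-\alpha$, so that $s(\alpha,\beta) = (k+1)^2-\alpha = e$ for an unstarred graph. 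The required ranges for $\alpha$ and $\beta$ in each case are forced by the subinterval containing $e$.

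I next check that the code fits in $n$ positions. The unpadded starred code has length $2k+1$ and the unpadded unstarred has length $2(k+1)$. Combining the hypothesis $e \leq \lfloor n^2/4 \rfloor$ with the lower bounds $e \geq k^2$ (starred) or $e \geq k(k+1)$ (unstarred), a brief case analysis on the parity of $n$ yields $n \geq 2k+1$ or $n \geq 2k+2$ as required. The sole failure cases are $e = k^2$ paired with $n = 2k$ and $e = k(k+1)$ paired with $n = 2k+1$; in each of these the other parsing, guaranteed to exist by the uniqueness exception below, has one fewer position and does fit.

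For uniqueness, within each subinterval the formulas determine $(\alpha,\beta)$ uniquely. The only $e$ lying in two adjacent subintervals are $e = k^2$ and $e = k(k+1)$. At such a boundary both parsings apply; for instance at $e = k(k+1)$ the starred form $0 \cdots 0 \, b^k \, *$ and the unstarred form $0 \cdots 0 \, a^{k+1}$ (with the trailing $1$ of the last $a$ interpreted as $*$) both spell out the same binary string $0 \cdots 0 \, (10)^k \, *$ of length $n$; the verification at $e = k^2$ is analogous. The main obstacle is this boundary bookkeeping: the one-letter difference in unpadded code length must be exactly compensated by a one-zero shift in the leading block so that the two parsings produce the identical binary word, and it is precisely this delicate matching that forces the bound $e \leq \lfloor n^2/4 \rfloor$ to be tight at the extremal values.
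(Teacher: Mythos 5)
Your proof is correct and follows the paper's approach: read $(\alpha,\beta)$ off of $e$ via Lemma \ref{lem:edgecount}, then pad with leading zeros. The paper declares this ``immediate'' and never verifies that $n - 2\alpha - 2\beta \ge 0$ (or $n - 2\alpha - 2\beta - 1 \ge 0$ in the starred case); your parity analysis, and in particular your identification of the only two borderline pairs $(n,e) = (2k, k^2)$ and $(n,e) = (2k+1, k(k+1))$ together with the observation that the alternate parsing rescues them, supplies exactly the step the paper skips. The closing sentence about ``forcing the bound to be tight'' is a slight digression and the phrase ``same binary string'' should really be ``same underlying code up to the rightmost $*$ position,'' but these are cosmetic; the substance is right.
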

\begin{proof}
Immediate from Lemma \ref{lem:edgecount}, noting that we add a block of $0$'s at the beginning of the code of size $n-2\alpha-2\beta$ in the unstarred case and $n-2\alpha-2\beta-1$ in starred case.
\end{proof}

\begin{cor}\label{cor:findingcode}
Given integers $n\ge 0$ and $0\le e \le \binom{n}{2}$ there is an almost alternating threshold graph on $n$ vertices with $e$ edges.
 Moreover the number of $a$'s and the number of $b$'s appearing in any binary representation of its code is determined by $e$, unless $e$ is of the form $k^2$, $k(k+1)$, $\binom{n}{2} - k^2$, or $\binom{n}{2} - k(k+1)$ for some $k<n/2$.  In these cases there are a different number of $a$'s and $b$'s, but both represent the same underlying code.
\end{cor}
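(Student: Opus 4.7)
My plan is to reduce the entire statement to Corollary \ref{cor:codesmall} together with a complementation duality. For $0 \le e \le \sm(n)$ the corollary already constructs a small almost alternating graph on $n$ vertices with $e$ edges, pins down $\alpha$ and $\beta$, and identifies the values $e \in \{k^2, k(k+1)\}$ where two distinct $(\alpha,\beta)$ pairs nonetheless encode the same underlying graph. So the substance of the proof is extending this to the range $\sm(n) \le e \le \binom{n}{2}$.

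For that, I would use the standard fact that bitwise complementation of a code (leaving any $*$ fixed) sends a threshold graph $T(\sigma)$ to its graph complement $\overline{T(\sigma)}$, which has the same number of vertices and $\binom{n}{2}-e$ edges. I would then observe that this operation restricts to a bijection between small and large almost alternating graphs: an initial block of $0$'s becomes an initial block of $1$'s, and since $a = 01$ and $b = 10$ are each other's bitwise complements, the remaining $a$'s and $b$'s are simply swapped. A short numerical check, handling $n$ even and $n$ odd separately, confirms $\binom{n}{2} - \sm(n) \le \sm(n)$, so setting $e' := \binom{n}{2} - e$ places $e'$ in the small range. Applying Corollary \ref{cor:codesmall} to $e'$ and then complementing yields the required large almost alternating graph with $e$ edges on $n$ vertices.

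For the moreover clause, I would track $(\alpha,\beta)$ through the duality. Complementation swaps $\alpha$ and $\beta$ while preserving $\alpha+\beta$, so the uniqueness of the pair (up to swap) from the small case transfers directly to the large case. The exceptional small values $e' \in \{k^2, k(k+1)\}$ correspond under $e = \binom{n}{2}-e'$ to exactly the two additional exceptional types $e \in \{\binom{n}{2}-k^2, \binom{n}{2}-k(k+1)\}$ listed in the statement, and at those values the two codes $00\cdots 0\, aa\cdots a$ and $00\cdots 0\, bb\cdots b\,*$ complement to the two alternative codes for the same alternating large graph, which is consistent with the already-established phenomenon that alternating graphs admit two $(a,b)$-representations.

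The main obstacle is essentially bookkeeping rather than anything conceptual: I need to verify that bitwise complementation really does act on almost alternating codes as claimed (interchanging the small and large classes and swapping $a$ with $b$), and check the inequality $\binom{n}{2}-\sm(n) \le \sm(n)$ in the two parity cases so that the reduction to the small range is valid. Neither step requires any new ideas beyond Lemma \ref{lem:edgecount} and Corollary \ref{cor:codesmall}.
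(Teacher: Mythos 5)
Your proposal follows essentially the same route as the paper: apply Corollary~\ref{cor:codesmall} for $e\le\sm(n)$, take complements to handle $e\ge\binom{n}{2}-\sm(n)$, and track $(\alpha,\beta)$ through complementation to establish the uniqueness and the exceptional cases. The one point the paper makes explicit and you leave implicit is the short calculation on the overlap range $\binom{n}{2}-\sm(n)\le e\le\sm(n)$ (writing $e=s(s-1)+j$ for $n=2s$, and similarly for odd $n$) confirming that the direct small construction and the complemented construction yield the \emph{same} code, hence the same $\alpha$ and $\beta$; this is precisely the bookkeeping you already flag as needed.
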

\begin{proof}
By Corollary \ref{cor:codesmall} there exists an almost alternating graph on $n$ vertices with $e$ edges when $0\le e \le \floor[big]{\frac{n^2}{4}}$ where the number of $a$'s and $b$'s is determined by $e$.  Since the complement of an almost alternating graph is almost alternating we can use these codes to find codes for all $e\ge \binom{n}{2} - \frac{n^2}{4}$.  

When $e$ is between $\binom{n}{2} - \left\lfloor\frac{n^2}{4}\right\rfloor$ and  $\left\lfloor\frac{n^2}{4}\right\rfloor$ we can find an almost alternating threshold graph in two seemingly different ways: by using Corollary \ref{cor:codesmall} or by complementing the graph on $\binom{n}{2}-e$ given by Corollary \ref{cor:codesmall}.  However, these result in the same code.  In fact, if $n=2s$ then we consider graphs that have between $s^2-s$ and $s^2$ edges. Using Corollary \ref{cor:codesmall} yields an unstarred code having $s = n/2 = \alpha + \beta$ and thus these graphs have no initial block of $0$'s or $1$'s.  Moreover, if $e=s(s-1)+k$ then the complement graph has $e'=s^2-k$ edges.  By Corollary \ref{cor:codesmall} this switches $\alpha$ with $\beta$ and complementing switches  $\alpha$ and $\beta$ back resulting in the other code.  This works similarly when $n$ is odd.  In that case we consider starred graphs and the star is unaffected by complementing.  Therefore, the values of $\alpha$ and $\beta$ are unique except in the alternating case.
\end{proof}

\subsection{Characterizing Almost Alternating Graphs}
The goal for this section is to establish an alternative characterization for almost alternating graphs that will help us prove Theorem \ref{thm:MaxMatchings}.

\begin{defi}
We will say a threshold graph code $\sigma$ has a \emph{bracketed 0-string} if it contains a substring with at least three 0's with 1's on both ends (where the 1 on the right end may be the $*$).  Similarly, $\sigma$ has a \emph{bracketed 1-string} if it contains a string of at least three 1's with 0's on both ends (where the $0$ on the right end may be the $*$).  We say $\sigma$ contains a \emph{bracketed string} in either case.
\end{defi}

\begin{ex}
The following codes have bracketed strings, which are highlighted.  The second is as short as it could be.
\begin{enumerate}
\item $100\mathbf{1000001}0101*$
\item $\mathbf{0111*}$
\end{enumerate}
\end{ex}

\begin{defi} We will say that a threshold graph code $\sigma$ has a \emph{separation issue} if it has two pairs of repeated digits separated by a  substring of odd length, with the first pair preceded by the opposite digit and and the last pair not ending the code.
\end{defi}

\begin{ex}  The following codes have separation issues. The two pairs of repeated digits are highlighted. The second example is as short as it could be.
\begin{enumerate}
\item $ 11110101\mathbf{00}0010101\mathbf{00}011*$
\item  $0\mathbf{11}0\mathbf{11}*$
\end{enumerate}

\end{ex}

In the next lemma we prove that these are the only obstacles to a graph being almost alternating.

\begin{lem}  
\label{almostalternating} A code that has neither a separation issue nor a bracketed string is the code of an almost alternating graph.
\end{lem}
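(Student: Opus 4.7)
The approach is constructive: given $\sigma$ free of both obstructions, I will exhibit its almost-alternating decomposition $\sigma = d^k$ followed by an $a,b$-sequence. The no-bracketed-string hypothesis immediately constrains the shape of $\sigma$: since any interior run of length at least three would be bounded on both sides by the opposite digit (with $*$ able to serve as opposite digit at the right end), every run of $\sigma$ other than the leading one must have length $1$ or $2$. Write $d$ for the leftmost digit of $\sigma$ and $L_0\ge 1$ for the length of its leading $d$-run.

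Next, I argue that the size $k$ of the initial block is essentially forced into $\{L_0-1, L_0\}$, since any smaller choice leaves the $a,b$-part beginning with two or more consecutive $d$'s, which is impossible for a pair to equal $a=01$ or $b=10$. To select between the two candidates, I examine the length-$2$ runs of $\sigma$ that lie at positions $\ge L_0$ and do not involve the trailing $*$. Listing their starting positions as $p_1<p_2<\cdots<p_m$, each such run is automatically preceded by the opposite digit (by alternation of runs in the suffix after the leading block) and each lies strictly interior to the code, so the no-separation-issue hypothesis forces $p_j-p_i$ to be even for every $i<j$. Hence all the $p_i$ share a common parity $\pi$, and I choose $k\in\{L_0-1,L_0\}$ with $k\not\equiv\pi\pmod 2$, which is always possible since the two candidates have opposite parities.

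It remains to verify that the suffix $\tau$ of $\sigma$ starting at position $k$ decomposes into $a$'s and $b$'s. By construction $\tau$ begins with $\bar d$ (when $k=L_0$) or with $d\bar d$ (when $k=L_0-1$), so there is no length-$2$ run at the very start of $\tau$. Partition $\tau$ into consecutive length-$2$ blocks from the left; each length-$2$ run in $\tau$ sits at odd offset from the beginning of $\tau$ by our choice of $k$, so it is split across two adjacent blocks, and the adjoining length-$1$ runs on either side then force each such block to have two distinct digits, hence equal $a$ or $b$. The final block may end in $*$; because $*$ can be taken to be either digit, I set it equal to the opposite of the preceding digit so the block is $a$ or $b$, or, if $|\tau|$ is odd, I simply leave $*$ separate. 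Either way, $\sigma$ reads as $d^k$ followed by a string of $a$'s and $b$'s, which is the definition of almost alternating.

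The main delicacy of the plan will be the consistent handling of $*$ throughout: both the ``ends the code'' clause in the definition of separation issue and the final pairing in our decomposition rely on the star's flexibility, so a careful case analysis on the parity of $n-k$ and on the digit at position $n-2$ is needed in the last step to verify that the chosen interpretation of $*$ is compatible with the parity argument used for the preceding length-$2$ runs.
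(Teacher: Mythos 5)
Your constructive argument is correct and is, at bottom, the contrapositive of the paper's proof by contradiction, but it fills in the key step that the paper leaves informal. The paper observes (as you do) that the no‑bracketed‑string hypothesis forces all non‑leading runs to have length at most $2$, and then simply \emph{asserts} that if the code cannot be written as a block followed by $a$'s and $b$'s, there must be two pairs of repeated digits separated by an odd‑length substring. Your parity argument is exactly the missing justification: once one notes that the initial block length $k$ must lie in $\{L_0-1,L_0\}$, the freedom from separation issues forces all interior length‑$2$ runs to start at positions of a common parity $\pi$, and choosing $k\not\equiv\pi$ makes every such run straddle a boundary of the length‑$2$ partition of the suffix, so each block reads $a$ or $b$. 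The observation that the first pair is automatically preceded by the opposite digit (by run‑maximality, whether the preceding digit lies in the leading block or in the suffix) correctly matches the paper's remark that otherwise the pair could be absorbed into the beginning block, and excluding runs that touch $*$ correctly matches the ``not ending the code'' clause. The one piece you flag but do not carry out --- the interaction between the parity of $n-k$ and the star --- does in fact go through: if $|\tau|$ is even the block $[n-3,n-2]$ cannot be a length‑$2$ run (it would sit at even offset, contradicting your parity choice), so the penultimate digit differs from the one before it and you are free to set $*$ opposite to it; if $|\tau|$ is odd the star sits alone; and a length‑$2$ run at $[n-2,n-1]$ can always be read away by the choice of $*$. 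Explicitly recording this short case check would close the remaining gap, but the plan as written is sound and in my view somewhat more rigorous than the paper's own exposition of this lemma.
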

\begin{proof}
Let $G = T(\sigma)$ be a threshold graph and suppose that $\sigma$ does not have a bracketed string or a separation issue.  If $\sigma$ is alternating then clearly $\sigma$ is almost alternating.  
Otherwise $\sigma_0=0^k$ or $\sigma_1=1^k$ appears somewhere in $\sigma$ for some $k\geq 2$.  
If there is no opposite digit to the left of $\sigma_i$ for $i=0$ or $1$ then $\sigma_i$ can be considered part of the beginning block.  
So suppose there is an opposite digit to the left of $\sigma_i$.  
If $k>2$ then we have the string $01^k0$ or $10^k1$ possibly using the $*$.  
This is a contradiction as we assumed there is no bracketed string.
Assuming $k=2$, if it is not possible to write the code as a beginning block of 0's or 1's followed by $a$'s and $b$'s then it must be the case that there are two pairs of repeated digits separated by a string of odd length.  Additionally, the first pair of repeated digits must be preceded by the opposite digit, else we could consider it part of the beginning block.  
The last pair of the repeated digits can not end the code since we can use the $*$ or not when finding the representation in $a$'s and $b$'s.
So $G$ has a separation issue, which is a contradiction.
 Thus, the code of $G$ has been written as a starting block followed by a string of $a$'s and $b$'s.\end{proof}

\subsection{Local Moves for Matchings}\label{sec:localmovesmatchings}
To prove Theorem \ref{thm:MaxMatchings} we will make local switches in the code of a threshold graph that result in a graph having at least as many matchings, without changing the number of vertices or edges.  The first move, the $ab$-switch, will preserve the total number of matchings.
The other two local moves in this section will together show that threshold graphs which are not almost alternating do not attain the maximum number of matchings.

First, we present a simple lemma that will be used repeatedly.
\begin{lem}
\label{abvertedge} Let $\sigma,\tau,$ and $\rho$ be (possibly empty) binary strings.  Suppose that $G=T(\sigma01\tau10\rho)$ has $n$ vertices and $e$ edges. The graph $G'=T(\sigma 10 \tau 01 \rho)$ is also a threshold graph on $n$ vertices having $e$ edges.
\end{lem}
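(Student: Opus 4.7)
The plan is to verify the three claims directly from the code-based definition. The graph $G'$ is threshold simply because it is built from a binary string; its code has length $|\sigma|+|\tau|+|\rho|+4$, matching the length of the code of $G$, so $G'$ has the same number $n$ of vertices as $G$. The only nontrivial claim is that the edge count is preserved.

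I would count edges via the contributions of individual $1$'s in the code. When the $k$-th vertex is added (reading right to left), the current graph already has $k-1$ vertices; a $1$ digit at that step makes the new vertex dominating and contributes $k-1$ new edges, while a $0$ contributes none. Equivalently, indexing positions from the right starting at $0$, a $1$ in position $p$ contributes exactly $p$ edges to the total. This is the same accounting used in the proof of Lemma \ref{lem:edgecount}.

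With this setup the proof reduces to two observations. First, the substrings $\sigma$, $\tau$, and $\rho$ occupy exactly the same positions (measured from the right) in both codes, so they contribute identically to $e(G)$ and $e(G')$. Second, I need to compare the contributions of the two $1$'s in the swapped blocks. Setting $r=|\rho|$ and $t=|\tau|$, in $G$ these $1$'s sit at positions $r+t+2$ (the $1$ in $01$) and $r+1$ (the $1$ in $10$); in $G'$ they sit at positions $r+t+3$ (the $1$ in $10$) and $r$ (the $1$ in $01$). Each pair sums to $2r+t+3$, so $e(G')=e(G)$. There is no real obstacle here beyond keeping the position arithmetic straight; a minor sanity check is that if $\rho$ is empty then the rightmost digit of the swapped block sits at position $0$ and contributes $0$ regardless of its value (or of any $*$ interpretation), so the count is unaffected.
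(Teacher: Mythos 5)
Your proof is correct: the position arithmetic checks out (the two relevant $1$'s sit at positions $r+1$ and $r+t+2$ in $G$ versus $r$ and $r+t+3$ in $G'$, both summing to $2r+t+3$, and every other digit occupies the same position in both codes). The paper itself gives no argument here beyond the word ``Straightforward,'' and your position-counting is exactly the accounting the paper uses elsewhere (in Lemma~\ref{lem:edgecount}), so this is the intended routine verification.
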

\begin{proof}
	Straightforward.
\end{proof}

\subsubsection{The $ab$-switch}

The next lemma will show that if we replace $ab = 0110$ with $ba = 1001$ in the code of a threshold graph the number of matchings of each size are preserved.  If $G'$ is obtained from $G$ by replacing  $ab$ with  $ba$ or vice versa, we will say that we have performed an \emph{$ab$-switch}.

\begin{lem}  
\label{abmatchings} Let $\sigma$ and $\rho$ be (possibly empty) binary strings. Consider $G=T(\sigma a b\rho)$ and $G'=T(\sigma b a\rho)$.  Then $G$ and $G'$ have the same number of vertices and edges. Moreover, $m(G) = m(G')$ and $m_k(G) = m_k(G')$ for all $k$.
\end{lem}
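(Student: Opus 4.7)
The plan is to build a size-preserving bijection $\cM(G)\to\cM(G')$ by sorting each matching according to its intersection with the four vertices encoded by the altered block. Let $B=\{v_1,v_2,v_3,v_4\}$ be these vertices in $G=T(\sigma\,0110\,\rho)$, listed in order of addition (so $v_1$ is rightmost in the code); then $v_1,v_4$ are isolated and $v_2,v_3$ dominating, and $G[B]$ is a triangle on $v_1v_2v_3$ together with the isolated $v_4$. In $G'=T(\sigma\,1001\,\rho)$ the four new vertices $u_1,u_2,u_3,u_4$ have $u_1,u_4$ dominating and $u_2,u_3$ isolated, and $G'[B']$ is the star $K_{1,3}$ centered at $u_4$, with $B'=\{u_1,\dots,u_4\}$. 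Writing $X$ for the common external vertex set, $R=V(T(\rho))$, and $D$ for the dominating vertices of $\sigma$, a direct check from the construction rule gives the external neighborhoods
\[
N(v_1)\cap X=N(v_4)\cap X=D,\qquad N(v_2)\cap X=N(v_3)\cap X=R\cup D,
\]
\[
N(u_2)\cap X=N(u_3)\cap X=D,\qquad N(u_1)\cap X=N(u_4)\cap X=R\cup D.
\]
In particular the multiset of external neighborhoods of the block is $\{D,D,R\cup D,R\cup D\}$ on both sides.

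Because every pair of edges in $G[B]$ (and in $G'[B']$) shares a vertex, any matching $M$ meets the block in either $\emptyset$ or a single edge; write $M=M_B\sqcup M^{\mathrm{ext}}$. In both graphs, whichever single edge (if any) is removed, the remaining two block vertices are non-adjacent, so $M^{\mathrm{ext}}$ ranges over all of $\cM(G-V(M_B))$ (respectively $\cM(G'-V(M_B))$) with no forbidden-edge constraint. I pair the four choices of $M_B$ on each side as
\[
\emptyset\leftrightarrow\emptyset,\quad \{v_2v_3\}\leftrightarrow\{u_1u_4\},\quad \{v_1v_2\}\leftrightarrow\{u_3u_4\},\quad \{v_1v_3\}\leftrightarrow\{u_2u_4\},
\]
and verify that paired complements are isomorphic via a map fixing $X$ that identifies the two remaining block vertices of equal external neighborhood. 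For instance, if $M_B=\{v_1v_2\}$ the remaining block vertices are $v_3$ (external neighborhood $R\cup D$) and $v_4$ (external neighborhood $D$); the paired $\{u_3u_4\}$ leaves $u_1$ ($R\cup D$) and $u_2$ ($D$), so $v_3\mapsto u_1$, $v_4\mapsto u_2$ induces the required isomorphism. For $M_B=\emptyset$ I first strip all three internal block edges from both graphs and then apply $v_1\mapsto u_2$, $v_4\mapsto u_3$, $v_2\mapsto u_1$, $v_3\mapsto u_4$.

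Summing the four resulting equalities gives, for every $k$,
\[
m_k(G)=\sum_{M_B\in\cM(G[B])}m_{k-|M_B|}\bigl(G-V(M_B)\bigr)=\sum_{M_B\in\cM(G'[B'])}m_{k-|M_B|}\bigl(G'-V(M_B)\bigr)=m_k(G'),
\]
and hence $m(G)=m(G')$ as well; the equality of vertex and edge counts is just Lemma~\ref{abvertedge} applied with $\tau$ empty. The only delicate point in the plan is the neighborhood bookkeeping: once the multiset $\{D,D,R\cup D,R\cup D\}$ is computed on both sides and one checks that each single-edge removal leaves an independent pair in the block, the four pairings and their isomorphisms are essentially forced by equating external degrees.
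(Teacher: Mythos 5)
Your proof is correct, but it takes a genuinely different route from the paper's. The paper argues directly on matchings: it defines a local replacement function $r$ on edges (swap $yw$ for $xv$, and resolve conflicts at the shared endpoints via symmetric differences) and builds two explicit size-preserving injections $\cM(G)\to\cM(G')$ and $\cM(G')\to\cM(G)$, concluding $m_k(G)=m_k(G')$. You instead decompose each matching by its trace on the four block vertices: since the block induces a triangle plus an isolated vertex in $G$ and a star $K_{1,3}$ in $G'$, at most one internal block edge can appear, and after deleting the endpoints of that edge (or stripping all internal block edges in the empty case) the resulting graphs on the two sides are isomorphic by a map fixing the external vertices, because the multiset of external neighborhoods is $\{D,D,R\cup D,R\cup D\}$ in both codes; summing over the four paired cases gives the equality for every $k$. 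Your neighborhood bookkeeping and the non-adjacency checks are all accurate, and the empty-trace case is handled correctly in the text; the only blemish is that your final displayed sum, read literally, has $G-V(\emptyset)=G$ in the $M_B=\emptyset$ term, which would double-count matchings containing a block edge --- that term should be $m_k\bigl(G-E(G[B])\bigr)$, exactly as your prose specifies. Comparing the two approaches: yours reduces the lemma to four isomorphism checks and yields the size-preserving bijection essentially for free, which is arguably cleaner; the paper's replacement-function technique is more hands-on here, but it is the template reused in Lemmas \ref{bracketedstring} and \ref{separationissue}, where one needs injections that are provably \emph{not} surjections to get strict inequalities, something the isomorphism decomposition would not supply directly.
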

\begin{proof}
By Lemma~\ref{abvertedge} we know that $G$ and $G'$ have the same number of vertices and edges.  In fact, up to isomorphism, $G$ and $G'$ differ only by one edge. Figure  \ref{fig:0110} demonstrates the difference between $G$ and $G'$.  On the left the subgraph of $G$ induced by the vertices associated to the $ab =0110$ is shown and on the right is the subgraph of $G'$ induced by the same subset of the vertices.  Note $G'$ can be obtained from $G$ by removing edge $yw$ and adding edge $xv$.

\begin{figure}[!ht]
\begin{center}
\begin{tikzpicture}
\coordinate (y) at (0,0);
\coordinate (x) at (0,2);
\coordinate (w) at (2,0);
\coordinate (v) at (2,2);
\coordinate (y2) at (4,0);
\coordinate (x2) at (4,2);
\coordinate (w2) at (6,0);
\coordinate (v2) at (6,2);

\draw (-.5,0) node {$y$};
\draw (-.5,2) node {$x$};
\draw (2.5,0) node {$w$};
\draw (2.5,2) node {$v$};

\draw (3.5,0) node {$y$};
\draw (3.5,2) node {$x$};
\draw (6.5,0) node {$w$};
\draw (6.5,2) node {$v$};

    \fill (y) circle (3pt) ;
    \fill (x) circle (3pt) ;
    \fill (w) circle (3pt) ;
    \fill (v) circle (3pt) ;
    \fill (y2) circle (3pt) ;
    \fill (x2) circle (3pt) ;
    \fill (w2) circle (3pt) ;
    \fill (v2) circle (3pt) ;

   \Edge(y)(w)
   \Edge(v)(w)
   \Edge(y)(v)
   
   \Edge(x2)(v2)
   \Edge(y2)(v2)
   \Edge(w2)(v2)

\end{tikzpicture}
\caption{The subgraph induced by $0110$ and the subgraph induced by $1001$.}
\label{fig:0110}
\end{center}
\end{figure}
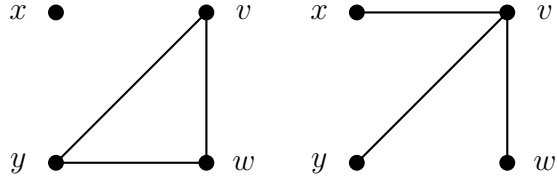

 In $G$, the code $0110$ corresponds to the vertices $xvwy$, i.e., $x$ is the left $0$, $y$ is the right $0$ and $v$ and $w$ are the left and right $1$'s, respectively.  
 In $G'$, on the other hand, $1001$ corresponds to $vxyw$.  That is, the vertex labels in the diagram are associated to a digit in the code and move with the digit when we make switches in the code.  Because of this, edges to the rest of the graph are the same in $G$ and $G'$. 
  
We will construct two injections: first $\phi: \cM(G)\to \cM(G')$  then $\phi':\cM(G') \to \cM(G)$. We first define a replacement function $r:E(G) \to E(G')$.  Let
 $$r(e)=
 \begin{cases}
 xv  &\text{ if } e=yw\\
 e\Delta\{x,y\} &\text{ if } x\in e\\
 e\Delta\{v,w\} &\text{ if } v\in e, w\notin e, y\notin e\\
 e  &\text{ otherwise}
 \end{cases}.$$  
 
We claim that any for edge $e$ in $E(G)$ we have $r(e)$ in $E(G')$.  Clearly $xv\in G'$. If $xc$ is some edge in $E(G)$ then $c$ must correspond to some $1$ to the left of $x$ in the code and so $c$ is also adjacent to $y$ in $G$.  Since $c\neq w$ we know $yc \in E(G')$.  
 Similarly, if $e=vc\in E(G)$ then $c$ corresponds to a $1$ to the left of both $v$ and $w$ or $c$ is to the right of $v$.   
 If $c$ is a $1$ to the left then $cw\in E(G)$ and so $cw\in E(G')$ as $c\neq y$.  
 If $c$ is to the right of $v$ then $c$ is also adjacent to $w$ in $G$ as the code of $w$ is a $1$ and $c\neq w$ by assumption.  
 So $cw\in E(G')$ since $cw\in E(G)$ and $c\neq y$ by assumption. Therefore $r(e)\in E(G')$ for all $e\in E(G)$. 
 
 Now define $\phi: \cM(G) \to \cM(G')$ by
 $$\phi(M)=
 \begin{cases}
 M  &\text{ if } yw\notin M\\
 \{r(e): e\in M\}  &\text{ if } yw\in M
 \end{cases}.$$
 
We need to show two things: that $\phi(M)$ is a matching in $G'$ and that $\phi$ is an injection.  
Note that $\phi(M)\subseteq E(G')$ by the argument above. 
To prove $\phi(M)$ is a matching we are only concerned about the case where we switch edges, i.e., when $yw\in M$. 
 If $yw\in M$ then we replace $yw$ with $xv$ and resolve conflicts at $x$ and $v$ resulting in a matching.

Suppose $M_1$ and $M_2$ are matchings such that $\phi(M_1)=\phi(M_2)$.  For any matching $M$ the edge $xv$ in $\phi(M)$ if and only if $yw$ is in $M$.  Thus, if $xv\notin \phi(M_1) = \phi(M_2)$ then $yw\notin M_1$ and $yw\notin M_2$ putting us in the first case so that $M_1=\phi(M_1)=\phi(M_2)=M_2$.  Suppose $xv\in \phi( M_1) = \phi(M_2)$.  Then $yw\in M_1 \cap M_2$ putting us in the second case.  If $yc\in \phi(M_1) = \phi(M_2)$ for some $c$ then $xc\in M_1\cap M_2$ and if $wd\in \phi(M_1)=\phi(M_2)$ for some $d$ then $vd\in M_1\cap M_2$.  Additionally, all other edges were not moved.  Thus, $M_1=M_2$.

Therefore, $\phi: \cM(G) \to \cM(G')$ is an injection and $m(G)\leq m(G')$.  Moreover, the injection preserves the size of the matching and so $m_k(G) \leq m_k(G')$ for all $k$.

Similarly define $r': E(G')\to E(G)$ by
$$r'(e) = 
\begin{cases}
yw &\text{if $e=xv$}\\
e\Delta\{x,y\} &\text{if $y\in e$}\\
e\Delta\{v,w\} &\text{if $w\in e$ and $v\notin e$}\\
e &\text{otherwise}
\end{cases}$$
and $\phi': \cM(G')\to \cM(G)$ by
$$\phi'(M)=
\begin{cases}
M &\text{if $xv\notin M$}\\
\{r(e): e\in M\} &\text{if $xv\in M$}
\end{cases}.$$
By a similar argument $\phi'$ is an injection that preserves size and so $m(G')\leq m(G)$ and $m_k(G')\leq m_k(G)$.  Therefore, $m(G) = m(G')$ and $m_k(G) = m_k(G')$.
\end{proof}

\begin{cor}
\label{cor:SameNumber}
Every almost alternating threshold graph on $n$ vertices and $e$ edges has the same number of matchings in total and the same number of matchings of each size. 
\end{cor}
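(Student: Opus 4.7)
The plan is to reduce Corollary \ref{cor:SameNumber} to two facts: (a) any two almost alternating threshold graphs on $n$ vertices with $e$ edges admit codes of the same shape---same leading block, same multiset $\{a^\alpha, b^\beta\}$ in the middle, same placement of the $*$---and (b) any two arrangements of $\alpha$ $a$'s and $\beta$ $b$'s can be related by a sequence of adjacent swaps of an $a$ with a $b$, each of which is an $ab$-switch at the binary-code level and hence preserves $m_k$ by Lemma \ref{abmatchings}.

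For (a), I would appeal to Corollary \ref{cor:findingcode}. Away from the exceptional $e$-values listed there, the pair $(\alpha, \beta)$ is determined by $e$, so the length and type (all $0$'s or all $1$'s) of the leading block is forced by $n$, $\alpha$, $\beta$, and the starred/unstarred status of the code; in the overlap range where both small and large representations apply, they coincide because the leading block is empty. In the exceptional alternating cases, the two permitted representations necessarily have $\alpha=0$ or $\beta=0$, so there is no freedom to reorder letters; up to the notational ambiguity in the $a,b$-representation, there is a unique almost alternating graph on $(n,e)$ and the conclusion is immediate.

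For (b), I would invoke the standard bubble-sort observation that any two rearrangements of a fixed multiset $\{a^\alpha, b^\beta\}$ differ by a finite sequence of transpositions of adjacent letters. A single such transposition $ab\leftrightarrow ba$ is precisely the substitution $0110\leftrightarrow 1001$ in the underlying binary code, i.e., an $ab$-switch in the sense of Lemma \ref{abmatchings}. Composing these switches yields a chain of almost alternating threshold graphs linking any $A$ to any $A'$ on the same $n$ and $e$, and the lemma guarantees that $m_k$ is unchanged at every step. Summing over $k$ then gives $m(A)=m(A')$, completing the proof. The main (minor) obstacle is the bookkeeping in (a)---tracking how the leading block and the $*$ align across two different representations---but Corollary \ref{cor:findingcode} already does precisely this work, so the argument amounts to assembling known results rather than overcoming any new obstruction.
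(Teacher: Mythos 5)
Your proposal is correct and follows essentially the same route as the paper: invoke Corollary \ref{cor:findingcode} to pin down the number of $a$'s, $b$'s, and the leading block, then use Lemma \ref{abmatchings} to commute $a$'s past $b$'s via a chain of $ab$-switches, preserving $m_k$ at every step. The only difference is presentational---you make the bubble-sort/adjacent-transposition step and the bookkeeping around the exceptional $e$-values explicit, whereas the paper states the commutativity of $a$'s and $b$'s and the determination of the code shape more tersely.
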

\begin{proof}
The proof of Corollary \ref{cor:findingcode} shows that given $n$ and $e$, the number of $a$'s and $b$'s and the length of the starting block is determined (modulo some values of $e$, but this does not affect this proof as the different encodings in $a$'s and $b$'s represent the same underlying code). Lemma \ref{abmatchings}   shows that $a$'s and $b$'s in the code of the threshold graph commute.  Thus, all almost alternating graphs that have the same beginning block (both in length and in digit) and have the same number of $a$'s and $b$'s have the same number of matchings.  So, all almost alternating graphs with $n$ vertices and $e$ edges have the same number of matchings and the same number of matchings of each size. 
\end{proof}

\subsubsection{Bracketed Strings}
The proof of the next lemma gives us our first local move that will  increase the number of matchings. 

\begin{lem} 
\label{bracketedstring}  Any graph on $n$ vertices with $e$ edges that has a bracketed string appearing in its code does not have the maximum number of matchings in $\cT_{n,e}$.  
\end{lem}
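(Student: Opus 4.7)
The plan is to produce, for a threshold graph $G=T(\sigma)$ whose code contains a bracketed string, a threshold graph $G'\in\cT_{n,e}$ with $m(G)<m(G')$. Assume first that $\sigma$ has a bracketed $0$-string, so we can write $\sigma=\sigma'\,1\,0^k\,1\,\rho$ with $k\ge 3$ (where the right-hand $1$ may in fact be the $*$). Let $G'=T(\sigma'\,0\,1\,0^{k-2}\,1\,0\,\rho)$, and label the $k+2$ substring positions $x_1,\dots,x_{k+2}$ from left to right. A direct edge-count shows that both substrings contribute exactly $k+1$ internal edges and each contains exactly two positions coded $1$ (so edges from substring vertices to $\rho$ also agree); moves within the substring do not alter adjacencies to $\sigma'$-vertices. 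Hence $G'\in\cT_{n,e}$.

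Next I would introduce the involution $\pi$ on the common vertex set that swaps $x_1\leftrightarrow x_2$ and $x_{k+1}\leftrightarrow x_{k+2}$ and fixes every other vertex. A routine case check against the various edge types (within the substring; substring-to-$\rho$; substring-to-$\sigma'$; both endpoints outside the substring) shows that $\pi$ is a graph isomorphism $G-e\to G'-e'$, where $e=\{x_1,x_2\}$ is the unique edge of $G$ whose $\pi$-image is a non-edge of $G'$, and symmetrically $e'=\{x_{k+1},x_{k+2}\}$ is the unique edge of $G'$ whose $\pi$-preimage is a non-edge of $G$. In particular $m(G-e)=m(G'-e')$. Using the identity $m(H)=m(H-f)+m(H-u-v)$ for any edge $f=\{u,v\}$, the inequality $m(G)<m(G')$ reduces to
\[
    m(G-x_1-x_2) < m(G'-x_{k+1}-x_{k+2}).
\]

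For this last inequality I would identify the two vertex sets by the bijection $\alpha$ that fixes every non-substring vertex and sends $x_{k+2}\mapsto x_2$, $x_{k+1}\mapsto x_1$, and $x_j\mapsto x_j$ for $3\le j\le k$. Since $G$'s only substring edges were incident to $x_1$, the graph $G-x_1-x_2$ has no substring-substring edges, and a check on the remaining adjacencies (substring-to-$\rho$, substring-to-$\sigma'$, and edges disjoint from the substring) shows that $\alpha$ embeds $G-x_1-x_2$ as a subgraph of $G'-x_{k+1}-x_{k+2}$. The target has $k-2\ge 1$ additional substring edges $\{x_2,x_j\}$ for $3\le j\le k$, and the single-edge matching $\{x_2 x_3\}$ is a matching of the target with no $\alpha$-preimage, yielding strict inequality and therefore $m(G)<m(G')$.

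The bracketed $1$-string case is handled symmetrically by the move $\sigma'\,0\,1^k\,0\,\rho\to\sigma'\,1\,0\,1^{k-2}\,0\,1\,\rho$, with the roles of the two special edges reversed (the edge present in $G$ now sits at the right end of the substring, and the edge present in $G'$ at the left end). The main obstacle is the bookkeeping for the almost-isomorphism $\pi$ and the embedding $\alpha$: one has to confirm that only the two ``transferred'' edges are mishandled by $\pi$, and the chief source of asymmetry to monitor is the adjacency between the substring and $\rho$, since the substring positions coded $1$ differ between $G$ and $G'$.
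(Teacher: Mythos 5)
Your proof is correct and uses the same local move as the paper (replacing $\sigma'10^k1\rho$ by $\sigma'010^{k-2}10\rho$, and symmetrically for $1$-strings), but it establishes $m(G)<m(G')$ by a genuinely different argument. The paper's proof builds an explicit injection $\phi\colon\cM(G)\to\cM(G')$ via a replacement map $r$ on edges, and then verifies by case analysis that $\phi$ is well-defined, size-preserving, injective, and misses a particular matching. You instead (a) make explicit the vertex relabeling $\pi$ (swapping positions $1\leftrightarrow2$ and $k+1\leftrightarrow k+2$) that the paper only uses implicitly when it says vertices ``move with the digit,'' (b) observe that $\pi$ gives an isomorphism $G-e\cong G'-e'$ for the unique edges $e=x_1x_2$, $e'=x_{k+1}x_{k+2}$ that differ, and (c) apply the deletion--contraction identity $m(H)=m(H-uv)+m(H-u-v)$ to reduce the strict inequality to an embedding $G-x_1-x_2\hookrightarrow G'-x_{k+1}-x_{k+2}$ that misses $k-2\ge1$ edges. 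The decomposition buys you modularity: the isomorphism disposes of matchings avoiding the moved edge, the embedding handles matchings through it, and strictness falls out from the single-edge matching $\{x_2x_3\}$ with no $\alpha$-preimage. Your argument also immediately yields the per-size inequalities $m_k(G)\le m_k(G')$ (via $m_k(H)=m_k(H-uv)+m_{k-1}(H-u-v)$), which the paper records as a byproduct of $\phi$ being size-preserving. I checked the isomorphism and embedding claims — including the substring-to-$\rho$ adjacencies, which as you note are the main bookkeeping hazard, and the edge case where the right bracket is the $*$ — and they hold as stated; the ``symmetric'' treatment of the bracketed $1$-string also goes through with $e$ sitting at positions $k+1,k+2$ and $e'$ at $1,2$.
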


\begin{proof}
Let $G$ be a graph on $n$ vertices with $e$ edges that has a bracketed 1-string, say $G = T(\sigma 01^j01 \tau)$ for $\sigma$ and $\tau$ (possibly empty) binary strings and $j\geq 3$.  Let $G' = T(\sigma 101^{j-2} 01\tau)$.  
We claim $G'$ has $n$ vertices, $e$ edges and strictly more matchings than $G$.  By Lemma \ref{abvertedge}, $G'$ has the same number of vertices and edges as $G$.

Our attention will be focused on the subgraph associated to the bracketed 1-string.  Figure \ref{fig:Bracketed1StringA} shows the subgraph of $G$ induced by the vertices in the bracketed $1$-string on the left.  On the right is the subgraph of $G'$ induced by the same subset of the vertices.

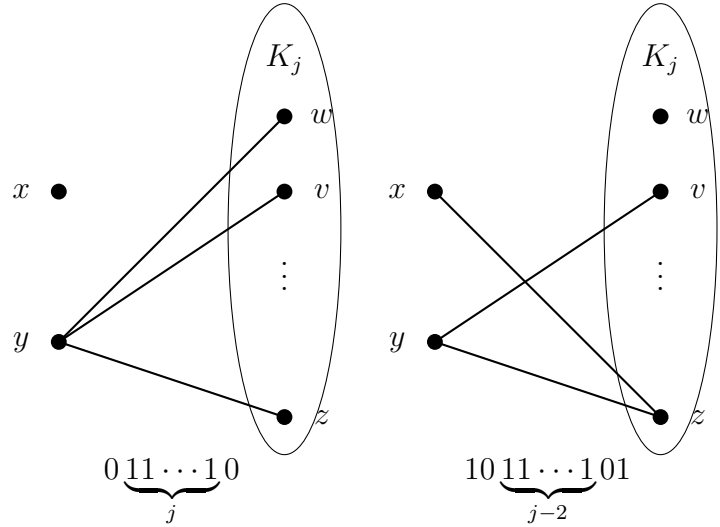
\begin{figure}[!ht]
\begin{center}
\begin{tikzpicture}
\coordinate (x) at (0,2);
\coordinate (y) at (0,0);
\coordinate (w) at (3,3);
\coordinate (v) at (3,2);
\coordinate (z) at (3,-1);

\draw (3,1.5) ellipse (.75cm and 3cm);
\draw (8,1.5) ellipse (.75cm and 3cm);

\draw (3,3.75) node {$K_j$};
\draw (8,3.75) node {$K_{j}$};

\draw (3,1) node {$\vdots$};
\draw (8,1) node {$\vdots$};

\draw (-.5,2) node {$x$};
\draw (-.5,0) node {$y$};
\draw (3.5,3) node {$w$};
\draw (3.5, 2) node {$v$};
\draw (3.5, -1) node {$z$};

\fill (x) circle (3pt);
\fill (y) circle (3pt);
\fill (w) circle (3pt);
\fill (z) circle (3pt);
\fill (v) circle (3pt);

\Edge (y)(z);
\Edge(y)(v);
\Edge(y)(w);

\draw (1.5,-2) node {$0\underbrace{11\cdots 1}_j0$};

\coordinate (x2) at (5,2);
\coordinate (y2) at (5,0);
\coordinate (w2) at (8,3);
\coordinate (v2) at (8,2);
\coordinate (z2) at (8,-1);

\draw (4.5,2) node {$x$};
\draw (4.5, 0) node {$y$};
\draw (8.5, 3) node {$w$};
\draw (8.5, 2) node {$v$};
\draw (8.5, -1) node {$z$};

\fill (x2) circle (3pt);
\fill (y2) circle (3pt);
\fill (w2) circle (3pt);
\fill (z2) circle (3pt);
\fill (v2) circle (3pt); 

\Edge (y2)(z2);
\Edge(y2)(v2);
\Edge(x2)(z2);

\draw (6.5, -2) node {$10\underbrace{11\cdots 1}_{j-2}01$};

\end{tikzpicture}
\caption{Removing a bracketed 1-string}
\label{fig:Bracketed1StringA}
\end{center}
\end{figure}

Let the vertex associated to the first 0 in the bracketed 1-string in $G$ be called $x$ and the vertex associated to the second 0 in the bracketed $1$-string be called $y$.  The vertices associated to the $1$'s in the bracketed 1-string in $G$ form a clique.  In Figure \ref{fig:Bracketed1StringA} the vertices inside the oval form this clique.  Let $w$ be the vertex associated to the rightmost $1$ in the bracketed 1-string and $z$ be the vertex associated to the leftmost $1$.    
Note that $V(G) = V(G')$ and $E(G')= E(G)-yw+xz$. To prove that $G'$ has strictly more matchings we will construct an injection from the matchings in $G$ to the matchings in $G'$ that is not a surjection.

First define $r(e): E(G)\to E(G')$ by
$$r(e)=
\begin{cases}
xz  &\text{if $e=yw$}\\
e\Delta \{z,w\} &\text{if $z\in e$ and $y\notin e$}\\
e\Delta \{x,y\} &\text{if $x\in e$}\\
e  &\text{otherwise}
\end{cases}.$$

One can show that $r(e)\in E(G')$ for all $e\in E(G)$.  Using $r(e)$ we define 
$$\phi(M)=
\begin{cases}
M  &\text{if $yw\notin M$}\\
\{r(e): e\in M\}  &\text{if $yw\in M$}
\end{cases}.$$

Suppose that $M_1$ and $M_2$ are two matchings in $G$ such that $\phi(M_1)=\phi(M_2)$.  Note that $xz\in \phi(M)$ if and only if $yw\in M$.  If $xz\notin \phi(M_1)=\phi(M_2)$ then $yw\notin M_1$ and $yw\notin M_2$ which means $M_1=\phi(M_1) = \phi(M_2) = M_2$.  Suppose that $xz\in \phi(M_1)=\phi(M_2)$.   Then $yw\in M_1\cap M_2$ and we use the second case.  We can ``undo" $r(e)$ to determine $M_1=M_2$.  In more detail, if $wc\in \phi(M_1)=\phi(M_2)$ for some $c$, then $zc\in M_1\cap M_2$ and if $yd\in \phi(M_1)=\phi(M_2)$ for some $d$ then $xd\in M_1\cap M_2$.  All other edges stay the same.  Thus $M_1=M_2$.  Consider the matching $M=\{xz, yv\}$.  Note that $M$ is not in the image of $\phi$ and so $m(G')>m(G)$.   Moreover, since $\phi$ preserves size, $m_k(G')\geq m_k(G)$ for each $k$.

Now consider a threshold graph on $n$ vertices with $e$ edges that has a bracketed 0-string, say $G = T(\sigma 10^j 1 \tau)$ for $\sigma$ and $\tau$ binary strings. Consider $G' = T(\sigma 010^{j-2} 10\tau)$. Figure \ref{fig:Bracketed0String}  demonstrates the difference between $G$ and $G'$ by showing the subgraph of $G$ induced by the bracketed 0-string on the left and the subgraph of $G'$ induced by the same vertices.

\begin{figure}[!ht]
\begin{center}
\begin{tikzpicture}
\coordinate (x) at (0,2);
\coordinate (y) at (0,0);
\coordinate (w) at (3,3);
\coordinate (v) at (3,2);
\coordinate (z) at (3,-1);

\draw (-.5,2) node {$x$};
\draw (-.5,0) node {$y$};
\draw (3.5,3) node {$w$};
\draw (3.5, 2) node {$v$};
\draw (3.5, -1) node {$z$};

\fill (x) circle (3pt);
\fill (y) circle (3pt);
\fill (w) circle (3pt);
\fill (z) circle (3pt);
\fill (v) circle (3pt);

\Edge (x)(z);
\Edge(x)(v);
\Edge(x)(w);

\Edge(x)(y);

\draw (1.5,-2) node {$1\underbrace{00\cdots 0}_j1$};

\coordinate (x2) at (5,2);
\coordinate (y2) at (5,0);
\coordinate (w2) at (8,3);
\coordinate (v2) at (8,2);
\coordinate (z2) at (8,-1);

\draw (4.5,2) node {$x$};
\draw (4.5, 0) node {$y$};
\draw (8.5, 3) node {$w$};
\draw (8.5, 2) node {$v$};
\draw (8.5, -1) node {$z$};

\fill (x2) circle (3pt);
\fill (y2) circle (3pt);
\fill (w2) circle (3pt);
\fill (z2) circle (3pt);
\fill (v2) circle (3pt); 

\Edge (y2)(z2);
\Edge(x2)(v2);
\Edge(x2)(z2);

\Edge(x2)(y2);

\draw (6.5, -2) node {$01\underbrace{00\cdots 0}_{j-2}10$};

\draw (3,1.5) ellipse (.75cm and 3cm);
\draw (8,1.5) ellipse (.75cm and 3cm);

\end{tikzpicture}
\caption{Removing a bracketed 0-string}
\label{fig:Bracketed0String}
\end{center}
\end{figure}
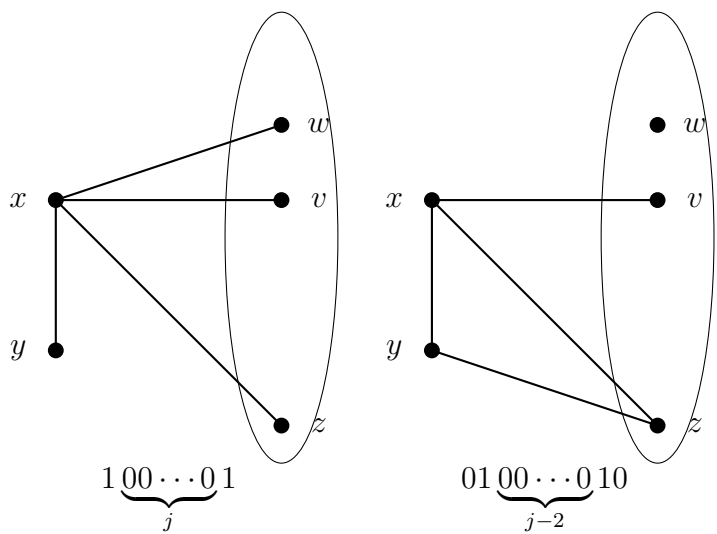

This time, the vertices in the oval represent the $0$'s in this segment of the threshold graph and therefore are an independent set.  Let $w$ be the vertex associated with the leftmost 0 in the bracketed 0-string and $z$ be the vertex associated with the rightmost $0$.  The $x$ represents the first $1$ and the $y$ is the second $1$.  Note $V(G)=V(G')$ and $E(G') = E(G) - xw + yz$.  

For a bracketed $0$-string, a similar injection from the matchings in $G$ to the matchings in $G'$ that is not a surjection can be defined.  
Define $r: E(G)\to E(G')$ by
$$r(e) =
\begin{cases}
yz &\text{if $e = xw$}\\
e\Delta\{y,x\} &\text{if $y\in e$ and $x\notin e$}\\
e\Delta\{z,w\} &\text{if $z\in e$ and $x\notin e$}\\
e  &\text{otherwise}
\end{cases}.$$

Define $\phi: \cM(G) \to \cM(G')$ by
$$\phi(M) = 
\begin{cases}
M &\text{if $xw\notin M$}\\
\{r(e): e\in M\} &\text{if $xw\in M$}\\
\end{cases}.$$

By a similar argument $\phi$ is an injection that preserves size and so $m(G)\leq m(G')$ and $m_k(G) \leq m_k(G')$ for all $k$.  Since  $M=\{xv, yz\}$ is not in the image of $\phi$ we know $G'$ has strictly more matchings than $G$.  Therefore, any graph that has a bracketed string does not have the maximum number of matchings.
\end{proof}

\subsubsection{Separation Issues}

Recall that a code has a separation issue if it has two pairs of repeated digits separated by a substring of an odd length with the first pair preceded by the opposite digit and the last pair not ending the code.  If there is a separation issue we can not write the code using $a$'s and $b$'s because we must separate each of the two sets of repeated digits to be an $a$ and a $b$, but we are left with a substring of odd length in the middle. So any graph with a separation issue is not almost alternating.

There are 4 possible types for the code of a threshold graph that has a separation issue depending on the value of the pairs of repeated digits. They are:
\begin{multicols}{2}
\begin{itemize}
\item $\cdots 011 \underbrace{\cdots}_{odd} 11\cdots *$
\item  $\cdots 011\underbrace{\cdots}_{odd} 00 \cdots *$ 
\end{itemize}
\begin{itemize}
\item $\cdots 100\underbrace{\cdots}_{odd} 00 \cdots *$ 
\item $\cdots 100\underbrace{\cdots}_{odd} 11 \cdots *$.
\end{itemize}
\end{multicols}

In the next lemma we will show that threshold graphs that have a separation issue do not maximize matchings.  The proof is by induction on the length of the substring in between the repeated pairs.  In the base case we will use an $ab$-switch to produce a bracketed string and then use Lemma \ref{bracketedstring} to conclude that the graph does not achieve the maximum.

\begin{lem} 
\label{separationissue} A threshold graph with a separation issue does not have the maximum number of matchings.
\end{lem}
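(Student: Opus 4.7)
The plan is to induct on the length $\ell$ of the odd-length substring $\mu$ separating the two repeated pairs. I will write a code exhibiting a separation issue as $\sigma \bar{Y} YY \mu ZZ \tau$, where $|\mu| = \ell$ is odd, $\bar Y$ is the digit opposite to $Y$, and $\tau$ is nonempty so that $ZZ$ does not end the code. The two tools are Lemma~\ref{abmatchings}, which says that an $ab$-switch preserves the number of matchings, and Lemma~\ref{bracketedstring}, which says that a code containing a bracketed string is not a maximizer. Combining these, it will suffice, for any code with a separation issue, to perform a (possibly empty) sequence of $ab$-switches producing a code that either contains a bracketed string or has a strictly shorter separation issue.

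For the base case $\ell = 1$, I run through the eight subcases indexed by $(Y,Z,m) \in \{0,1\}^3$, where $m$ is the single middle character. In each subcase the six-character substring $\bar Y YY m ZZ$ either already contains a bracketed string (extending outward through $\sigma$ or $\tau$, using that $\tau$ ends with $*$, which may be read as either digit at the right boundary of a bracketed run), or a single $ab$-switch $0110 \leftrightarrow 1001$ applied at the first four positions produces a code containing a bracketed string. For instance, when $(Y,Z,m) = (1,1,0)$ the substring is $011011$, and the switch $0110 \to 1001$ gives $100111$, whose $111$ is bracketed on the left by $0$ and on the right by the first $0$ or $*$ in $\tau$; the case $(Y,Z,m) = (0,0,1)$ is symmetric, and the remaining six subcases give a bracketed string without any switch. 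Lemma~\ref{bracketedstring} (after Lemma~\ref{abmatchings} where needed) closes each subcase.

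For the inductive step $\ell \ge 3$, I examine the first two and last two characters of $\mu$. If $\mu[1] = Y$ or $\mu[\ell] = Z$, the adjacent original pair extends into $\mu$ to yield a run of at least three consecutive $Y$'s (respectively $Z$'s); this run is bracketed on the outer side by $\bar Y$ (respectively $\bar Z$ or $*$) and, extending until the first opposite digit or $*$ on the other side, gives a bracketed string, so Lemma~\ref{bracketedstring} applies. If $\mu$ begins with $\bar Y \bar Y$, then this internal pair together with the original $ZZ$ constitutes a separation issue in the same code with middle $\mu[3]\cdots\mu[\ell]$ of length $\ell - 2$; the analogous statement holds if $\mu$ ends with $\bar Z \bar Z$. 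Either way, the inductive hypothesis applies. In the remaining case $\mu$ begins with $\bar Y Y$ and ends with $Z \bar Z$; then the first four characters of $\bar Y YY \mu$ are $\bar Y YY \bar Y$, which is $0110$ or $1001$. Applying the $ab$-switch there replaces them by $Y\bar Y \bar Y Y$, and since $\mu[2] = Y$ is unchanged, positions 4 and 5 of the new substring are $YY$, preceded by $\bar Y$ at position 3. The new code thus has a separation issue of length $\ell - 2$ between this new $YY$ and the untouched $ZZ$; by Lemma~\ref{abmatchings} and the inductive hypothesis the original graph does not maximize matchings.

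The main obstacle is organizational: there are four rotationally/reflectively symmetric types of separation issues, each with several sub-cases in the inductive step, and in each one must verify that the new code really does exhibit either a bracketed string or a shorter separation issue satisfying both the preceded-by-opposite-digit and not-ending-the-code conditions. The conceptual content, however, lies in a single move: the boundary switch $0110 \leftrightarrow 1001$ pushes a new pair two positions deeper into $\mu$, shrinking the odd gap by exactly two, so iterating the reduction eventually terminates in the tractable base case $\ell = 1$.
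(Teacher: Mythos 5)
Your proof is correct and takes essentially the same approach as the paper's: induct on the length of the odd middle substring, use $ab$-switches (Lemma~\ref{abmatchings}) to shrink the separation issue by two each step, and close with Lemma~\ref{bracketedstring} once a bracketed string appears. The one organizational difference is that the paper first invokes Lemma~\ref{almostalternating} to assume away bracketed strings, forcing the middle to lie in $\{a,b\}^*$ (so its four cases are cleaner and some base cases start at length three), whereas you handle bracketed strings and same-code shorter separation issues directly inside the inductive case split — a purely expository trade-off, not a difference in substance.
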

\begin{proof}
We will only consider threshold graphs that have a separation issue and do not have a bracketed string since we have already proven that threshold graphs with bracketed strings do not have the maximum number of matchings. 
Define $\{a,b\}^*$ to be the set of all words formed with $a$'s and $b$'s.  By extension, we will talk about 0,1 strings being in $\{a,b\}^*$  if they can written as $a$'s and $b$'s.  We write, for example, $\{a,b\}^*a$ to mean a word in $\{a,b\}^*$ followed by an $a$.  Also, let $\epsilon$ be the empty word. 
   
  For the first case, suppose $G=T(\sigma 011 \underbrace{\cdots}_{odd} 11\tau)$ where $\sigma$ and $\tau$ are binary strings and $\tau$ is not the empty word.  Assume that the separation issue in the code of $G$ is minimal.  
  We claim that $G$ has a substring that looks like
   $$0110~w ~11$$
   for some $w\in\{a,b\}^*b\cup\{\epsilon\}$
   and the last digit shown here is not the last digit in the code.  
   
There are a couple of characteristics of this string that do not follow simply from the definition of a separation issue.  If there were not another $0$ following the first pair of repeated digits we would have a bracketed $1$-string.  By Lemma \ref{almostalternating} $w\in\{a,b\}^*$ since we are considering a minimal separation issue with no bracketed strings. 
The string must end with a $b$ to avoid a bracketed $1$-string with the second pair of repeated $1$'s.    
  
   Extending this idea to all cases, $G$ must have one of the following four strings in its code, 
\begin{enumerate}
\item  A $0110$ followed by $w\in\{a,b\}^*b\cup \{\epsilon\}$ followed by $11$.
$$0110~w~11$$

\item A $1001$ followed by $w\in\{a,b\}^*a\cup\{\epsilon\}$ followed by $00$.
$$1001~w~ 00$$

\item  A $0110$ followed by $w\in\{a,b\}^*a$ followed by $00$.
$$0110~w~00$$

\item  A $1001$ followed by $w\in\{a,b\}^*b$ followed by $11$.  
$$1001~w~11$$
\end{enumerate}

In all cases, there must be at least one more digit (or a $*$) to the right. The fact that $w$ can not be the empty word in the third and fourth cases is also necessary to avoid a bracketed string, as is the condition that $w$ end with a certain code.   

We claim that if these strings are part of the binary representation of the threshold graph with $n$ vertices and $e$ edges then the graph does not have the maximum number of matchings in $\cT_{n,e}$.

  The proof is by induction.  The proof of each case is similar, we will do cases 1 and 3 to illustrate the induction.
  
 \begin{enumerate}
 \item[1.]  
 
For the base case of the first case $w = \epsilon$ and so $G = T(\sigma 011011\tau)$ for strings $\sigma$ and $\tau$ with $\tau$ not empty.  Applying Lemma \ref{abvertedge}
 $$m(T(\sigma \underline{01}~\underline{10}~11 \tau) = m(T(\sigma 100111 \tau)).$$
Since $\tau$ is not empty we have a bracketed 1-string and can conclude the graph does not maximize matchings in $\cT_{n,e}$.  
Assume that a graph that has a substring of the form $0110~w~11$ is not maximal for all $w\in\{a,b\}^*b$ of length at most $n$.  Suppose $G = T(\sigma 0110 ~w~11\tau)$ where $\sigma$ and $\tau$ are binary strings with $\tau$ not empty and $w$ has length $n+1$.
 
   Suppose $w$ is a word of length $n+1$ that starts with $a$.  Let $w'$ be such that $w = 01 w'$.  So $w'$ has length $n$.  Then
 \begin{align*}
 m(T(\sigma~ 01~10~w~11~\tau))&=m(T(\sigma~ 01~\underline{10}~\underline{01}~w'~11~\tau))\\
 &= m(T(\sigma~ 01~ 01~10~w'~11~ \tau))
 \end{align*}
 using an $ab$-switch.  Now we have a separation issue of shorter length and so by the inductive hypothesis $G$ does not have the maximum number of matchings in $\cT_{n,e}$.
  
 Now consider the case where $w$ starts with a $b$.  Let $w'$ be such that $w = 10w'$.  So $w'$ has length $n$. Then 
 \begin{align*}
 m(T(\sigma~01~10~w~11~\tau))
 &=m(T(\sigma~\underline{01}~\underline{10}~10~w'~11~\tau))\\
 &= m(T(\sigma~10~ 01~10~w'~11~\tau)).
 \end{align*}
  Again we have reduced to a separation issue of shorter length. Thus any graph that contains the string in case 1 is not maximal.
  
 \item[3.]  
For the base case in the third case $w$ has length 1.  Since $w$  must end in $a$, we know that $w=a$.  Consider $G = T(\sigma 01100100\tau)$ where $\sigma$ and $\tau$ are binary strings and $\tau$ is not empty.  
Using an $ab$-switch,
$$m(T(\sigma ~01~\underline{10}~\underline{01}~00~\tau))= m(T(\sigma ~01~01~10~00~\tau)).$$
Recalling that $\tau$ is not empty we get a bracketed $0$-string and thus the graph does not have the  maximum number of matchings.  Now assume that any threshold graph of the form $ T(\sigma~0110~w~00~\tau)$ where $\sigma$ and $\tau$ are binary strings, $\tau$ is not empty, 
 and $w$ has length at most $n$ is not maximal.  Suppose $G=T(\sigma~0110~w~00~\tau)$ where $w\in\{a,b\}^*a$ has length $n+1$.  If $w$ starts with an $a$, let $w = aw'$ where $w'\in\{a,b\}^*a$ of length $n$. Then
\begin{align*}
m(T(\sigma ~01~10~w~00~\tau))&=m(T(\sigma~01~\underline{10}~\underline{01}~w'~00~\tau))\\
&=m(T(\sigma~ 01~01~10~w'~00~\tau))
\end{align*}
which has a shorter separation issue.  Similarly,  if $w$ begins with a $b$ write $w = bw'$ where $w'$ has length $n$. Then
\begin{align*}
m(T(\sigma~01~10~w~00~\tau))&=m(T(\sigma~\underline{01}~\underline{10}~10~w'~00~\tau))\\
&=m(T(\sigma~ 10~01~10~w'~00~\tau))
\end{align*}
which has a shorter separation issue.
 \end{enumerate}
\end{proof}

\subsection*{Proof of Theorem \ref{thm:MaxMatchings}}\label{proof:maxmatchings}

Recall that our main theorem concerning matchings states that a threshold graph has the maximum number of matchings among all graphs in $\cT_{n,e}$ if and only if $G$ is almost alternating.  


\begin{proof}
Suppose that a threshold graph $G$ is not  almost alternating. Then by Lemma \ref{almostalternating}, $G$ has a separation issue or a bracketed string.  By Lemma  \ref{bracketedstring} and Lemma \ref{separationissue}, $G$ does not have the maximum number of matchings.  Thus we conclude that almost alternating graphs maximize the number of matchings.  Since any graph that is not almost alternating has strictly fewer matchings and, by Corollary \ref{cor:SameNumber}, all almost alternating graphs have the same number of matchings we conclude that a threshold graph has the maximum number of matchings if and only if it is almost alternating.  
\end{proof}

\section{Minimizing Independent Sets}\label{sec:ind}

\subsection{Colex Graphs}\label{sec:colex}

Recall that colex graphs can be defined using the colex ordering on $E(K_n)$.  The following lemma gives an alternative characterization of threshold graphs using the binary code representation.

\begin{lem}\label{colex}
A graph is colex if and only if at most one $0$ appears after the first $1$ in the code.
\end{lem}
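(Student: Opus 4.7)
The plan is to prove both directions of the characterization by computing the threshold code of $\cC(n,e)$ explicitly. The forward direction does almost all the work, and the reverse direction is then essentially a uniqueness observation.

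For the forward direction, I would write $e = \binom{k}{2}+j$ with $0\le j < k$ (handling $e=0$ as a trivial separate case). By the definition of the colex ordering, $\cC(n,e)$ consists of the clique $K_k$ on $\{1,\dots,k\}$, together with vertex $k+1$ joined precisely to $\{1,\dots,j\}$, plus isolated vertices $\{k+2,\dots,n\}$. To read off the code I would realize $G$ as a sequence of isolated/dominating additions in the following order: designate some vertex of $\{j+1,\dots,k\}$ as the $*$; add the remaining members of $\{j+1,\dots,k\}$ as dominating, producing $K_{k-j}$; add vertex $k+1$ as isolated, which is legitimate since $k+1$ has no neighbour in $\{j+1,\dots,k\}$; add vertices $j,j-1,\dots,1$ as dominating; and finally add $k+2,\dots,n$ as isolated. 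Reading left to right gives the code
\[
    \sigma \;=\; 0^{n-k-1}\,1^{j}\,0\,1^{k-j-1}\,*,
\]
which has exactly one $0$ to the right of the first $1$. When $j=0$ the $1^{j}$ block disappears and the middle $0$ is absorbed into the leading block, yielding $0^{n-k}\,1^{k-1}\,*$ with no $0$ after the first $1$.

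For the reverse direction, I would note that any code $\sigma$ with at most one $0$ after the first $1$ must have one of the three forms $0^{n-1}*$, $0^{a}\,1^{b}\,*$, or $0^{a}\,1^{b}\,0\,1^{c}\,*$ (with $b\ge 1$, $c\ge 0$, and the lengths summing to $n$). In each case the forward direction exhibits a colex graph with precisely this code: for $0^{n-1}*$ take $\cC(n,0)$; for $0^a 1^b *$ take $\cC(n,\binom{b+1}{2})$; for $0^a 1^b 0 1^c *$ take $\cC(n,\binom{b+c+1}{2}+b)$, using $k = b+c+1$ and $j = b$. Since a threshold graph is determined by its code, $G$ must be that colex graph.

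The main obstacle I anticipate is the boundary bookkeeping rather than any single hard step: treating $e=0$ and $e=\binom{n}{2}$ carefully, absorbing the middle $0$ into the leading block when $j=0$, and verifying that each step of the proposed construction order really is a valid isolated-or-dominating addition. None of this is conceptually difficult, but it needs to be laid out in subcases to avoid off-by-one errors in the code length.
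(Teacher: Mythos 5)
Your proof is correct and follows essentially the same route as the paper's: both identify the structure of $\cC(n,e)$ as a clique together with isolated vertices and at most one additional vertex joined to part of the clique, and then translate that structure into the constraint on the threshold code. The paper states this structural description tersely (``It is easy to see...'') and asserts the code correspondence in one sentence, whereas you make the same idea fully explicit by writing $e=\binom{k}{2}+j$, exhibiting a concrete isolated/dominating build order, and reading off the code $0^{n-k-1}1^{j}01^{k-j-1}*$; your reverse direction is likewise a careful enumeration of the admissible code shapes. So the substance is identical, with your write-up supplying the boundary bookkeeping that the paper waves away.
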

\begin{proof}
It is easy to see a graph is colex if and only if the vertex set can be partitioned into three sets: one on which the induced subgraph is a clique, one on which the induced subgraph is an independent set, and the third set consisting of at most one vertex that is incident only to vertices in the clique.  This corresponds to a binary code with an initial string of $0$'s followed by a string of $1$'s in which there may be one $0$ that corresponds to the vertex that is incident to some of the vertices in the clique.
\end{proof}

\subsection{Local Moves for Independent Sets}\label{sec:localmovesindsets}

In this section we prove two lemmas that, together, show that any threshold graph that has two $0$'s appearing after the first $1$ does not achieve the minimum number of independent sets.  Lemma \ref{bracketedstring2} deals with the case of consecutive $0$'s and Lemma \ref{twozeros} deals with the case of non-consecutive $0$'s. 
	
\begin{lem}\label{bracketedstring2}
For all $j\geq 2$, 

\[i(T(\sigma 1 0^j 1 \rho)) > i(T(\sigma 010^{j-2}10 \rho))\]
and for all $k\geq 0$, 

\[i_k(T(\sigma 1 0^j 1 \rho)) \geq i_k(T(\sigma 010^{j-2}10 \rho)).\]
\end{lem}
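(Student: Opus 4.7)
My plan is to reduce the lemma to the case $\sigma = \emptyset$ and then compute the independent-set generating functions directly.

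Reading codes from right to left, the vertices of $\sigma$ are added last, so each is either coded $1$ (adjacent to every previously added vertex) or $0$ (adjacent to none). Since the ``previously added'' vertex set consists of $V(T(\rho))$ together with the $j+2$ vertices from the bracketed string---a set of the same size and playing the same role in both $G$ and $G'$---every $\sigma$-vertex has identical adjacencies in the two graphs, and the induced subgraphs on $V_\sigma$ coincide. Splitting an independent set $I$ as $I = I_1 \sqcup I_2$ with $I_2 = I \cap V_\sigma$, the set $I_2$ either contains a $1$-vertex of $\sigma$ (forcing $I_1 = \emptyset$, since such a vertex dominates every vertex below) or consists of pairwise non-adjacent $0$-vertices of $\sigma$ (leaving $I_1$ unconstrained in its own induced subgraph). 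Letting $z$ denote the number of $0$'s in $\sigma$, this yields
\[
    i_k(G) - i_k(G') = \sum_{\ell = 0}^{k} \binom{z}{\ell}\bigl(i_{k-\ell}(G_1) - i_{k-\ell}(G_1')\bigr),
\]
where $G_1 = T(1 0^j 1 \rho)$ and $G_1' = T(0 1 0^{j-2} 1 0 \rho)$. It thus suffices to prove the statement with $\sigma = \emptyset$.

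Let $H = T(\rho)$ and $I_H(X) = \sum_k i_k(H)\,X^k$. Label the middle vertices of $G_1$ as $u$ (the leftmost $1$, dominating every vertex below), $v$ (the rightmost $1$, adjacent only to $u$ and $V(H)$ in the middle$+H$ part), and $p_1, \ldots, p_j$ (the zeros, each adjacent only to $u$). Splitting independent sets of $G_1$ by whether they contain $u$, contain $v$ but not $u$, or neither gives
\[
    I_{G_1}(X) = X + X(1+X)^{j} + (1+X)^{j}\,I_H(X).
\]
For $G_1'$, the middle vertices form a similar configuration with one ``extra'' isolated-from-the-middle $0$-vertex $w'$ on the far left; splitting on the two $1$-vertices of the middle together with the rightmost $0$-vertex yields
\[
    I_{G_1'}(X) = X(1+X) + X(1+X)^{j-1} + (1+X)^{j}\,I_H(X).
\]
Subtraction gives
\[
    I_{G_1}(X) - I_{G_1'}(X) = X^{2}\bigl((1+X)^{j-1} - 1\bigr),
\]
which has all nonnegative coefficients for $j \ge 2$, establishing $i_k(G_1) \ge i_k(G_1')$ for every $k$, and evaluates at $X=1$ to $2^{j-1} - 1 \ge 1$, giving $i(G_1) > i(G_1')$. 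Combining with the decomposition above proves both inequalities in the lemma.

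The main obstacle is the reduction step: one needs to verify carefully that the $\sigma$-vertices have identical neighbourhoods outside of $V_\sigma$ in $G$ and $G'$, so that the contribution of ``$\sigma$-heavy'' independent sets cancels between the two graphs and only the bottom portion governed by $G_1$ versus $G_1'$ matters. Once this reduction is in place, the remaining computation is a routine case analysis on a handful of middle vertices followed by the polynomial identity above.
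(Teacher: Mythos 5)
Your proof is correct, and it takes a genuinely different route from the paper's. The paper proceeds by constructing an explicit size-preserving injection $\phi\colon \cI(G')\setminus\cI(G)\to \cI(G)\setminus\cI(G')$ that replaces the (newly independent) pair $\{x,w\}$ by $\{y,z\}$, then exhibits a set outside the image to get strictness. You instead first peel off $\sigma$ via a decomposition argument, and then compute the two independent-set polynomials directly. The reduction step is valid: since every $\sigma$-vertex is either a $0$ (adjacent to nothing below) or a $1$ (adjacent to everything below, and hence forces the rest of the independent set to lie in $V_\sigma$), the counts split as $i_k(G) = c_k + \sum_\ell \binom{z}{\ell} i_{k-\ell}(G_1)$ where $c_k$ depends only on $\sigma$ and hence cancels in the difference; the same holds for $G'$, giving your displayed formula. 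Your case analysis and the resulting polynomials $I_{G_1}$, $I_{G_1'}$ check out, and the clean identity $I_{G_1}(X)-I_{G_1'}(X)=X^2\bigl((1+X)^{j-1}-1\bigr)$ immediately gives $i_k(G_1)\ge i_k(G_1')$ and $i(G_1)-i(G_1')=2^{j-1}-1>0$; combined with the reduction this yields $i(G)-i(G')=2^z\bigl(i(G_1)-i(G_1')\bigr)>0$. What your approach buys is an exact, size-graded formula for the deficit, which is more informative than the paper's injection; what the paper's injection buys is a shorter local argument that never needs to isolate $\sigma$ and so sidesteps the (correctly handled, but somewhat delicate) reduction step.
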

\begin{proof}

Consider the graphs $G = T(\sigma 10^j1\rho)$ and $G' = T(\sigma 010^{j-2}1 0 \rho)$ where $j\geq 2$ and $\sigma$ and $\rho$ are (possibly empty) binary strings.  By Lemma \ref{abvertedge}, $G$ and $G'$ have the same number of vertices and edges. Figure \ref{fig:Bracketed0String2} shows the subgraphs induced by the vertices not in $\sigma$ or $\rho$ of $G$ and $G'$ on the left and right, respectively.

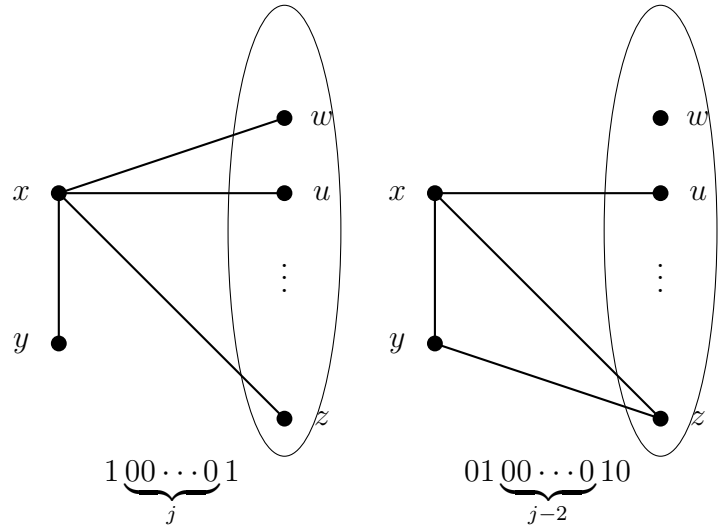
\begin{figure}[!ht]
\begin{center}
\begin{tikzpicture}
\coordinate (x) at (0,2);
\coordinate (y) at (0,0);
\coordinate (w) at (3,3);
\coordinate (v) at (3,2);
\coordinate (z) at (3,-1);

\draw (-.5,2) node {$x$};
\draw (-.5,0) node {$y$};
\draw (3.5,3) node {$w$};
\draw (3.5, 2) node {$u$};
\draw (3,1) node {$\vdots$};
\draw (3.5, -1) node {$z$};

\fill (x) circle (3pt);
\fill (y) circle (3pt);
\fill (w) circle (3pt);
\fill (z) circle (3pt);
\fill (v) circle (3pt);

\Edge (x)(z);
\Edge(x)(v);
\Edge(x)(w);

\Edge(x)(y);

\draw (1.5,-2) node {$1\underbrace{00\cdots 0}_j1$};

\coordinate (x2) at (5,2);
\coordinate (y2) at (5,0);
\coordinate (w2) at (8,3);
\coordinate (v2) at (8,2);
\coordinate (z2) at (8,-1);

\draw (4.5,2) node {$x$};
\draw (4.5, 0) node {$y$};
\draw (8.5, 3) node {$w$};
\draw (8.5, 2) node {$u$};
\draw(8,1) node {$\vdots$};
\draw (8.5, -1) node {$z$};

\fill (x2) circle (3pt);
\fill (y2) circle (3pt);
\fill (w2) circle (3pt);
\fill (z2) circle (3pt);
\fill (v2) circle (3pt); 

\Edge (y2)(z2);
\Edge(x2)(v2);
\Edge(x2)(z2);

\Edge(x2)(y2);

\draw (6.5, -2) node {$01\underbrace{00\cdots 0}_{j-2}10$};

\draw (3,1.5) ellipse (.75cm and 3cm);
\draw (8,1.5) ellipse (.75cm and 3cm);

\end{tikzpicture}
\caption{Removing consecutive $0$'s following the first $1$}
\label{fig:Bracketed0String2}
\end{center}
\end{figure}

We claim $i(G')< i(G)$.  Define $\phi: \mathcal{I}(G') \setminus \mathcal{I}(G) \to \mathcal{I}(G) \setminus \mathcal{I}(G')$ by 
\[\phi (A)  = A - \{x,w\} + \{y,z\}.\]  
Let $A\in\mathcal{I}(G') \setminus \mathcal{I}(G)$.  Note that $\{x,w\}\subset A$.  Consider $A' = A - \{x,w\} + \{y,z\}$.  Note that $y\notin A$ and $z\notin A$ since both $y$ and $z$ are incident to $x$ in $G'$.  Moreover, in $G$, $A'$ is an independent set.  To prove this we need only show that no vertex in $A'$ is incident to $y$ or $z$.  Say $v$ is incident to $y$. Then $v$ is to the right of $y$ or is to the left of $y$ and has code $1$.  Note that in both of these cases $v$ would be incident to $x$ in $G'$ and thus is not $A$.  Therefore no vertex incident to $y$ is in $A'$.  Similarly, if $v$ were incident to $z$ then $v$ must be a vertex to the left of $z$ with code $1$ and hence would also be incident to $w$.  Thus, $A' \in \mathcal{I}(G)$. Since $\{y,z\}\subset A'$ and $yz$ is an edge in $G'$ we know $A'\notin \mathcal{I}(G')$.  Since $\phi$ is clearly an injection that preserves size, $i(G') \leq i(G)$ and $i_k(G') \leq i_k(G)$.  Moreover, $\{y,w,z\}$ is not in the image of $\phi$ and so $i(G') <i(G)$.
\end{proof}

By Lemma \ref{bracketedstring2} any string of two or more $0$'s is part of the initial segment of $0$'s.  The only threshold graphs that remain that are not colex graphs have binary code $\rho101^j01\sigma$ for some binary strings $\sigma$ and $\rho$ and some $j\geq 1$. That is, two $0$'s must appear after the initial $1$ and these two $0$'s are not adjacent.

\begin{lem}\label{twozeros}
Let $\sigma$ and $\rho$ be binary strings. For all $j\geq 1$, $i(T(\rho101^j01\sigma)) >  i(T(\rho01^{j+2}0\sigma))$ and for all $k\geq 0$, $i_k(T(\rho101^j01\sigma)) \geq  i_k(T(\rho01^{j+2}0\sigma))$.
\end{lem}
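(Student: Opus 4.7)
I would construct an injection $\phi\colon \cI(G')\to \cI(G)$ that preserves cardinality and is not surjective, in the spirit of Lemma~\ref{bracketedstring2}.

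Label the middle vertices (those outside $\sigma$ and $\rho$) by position from right to left as $z, w, c_1, \ldots, c_j, y, x$. In $G$ the vertices $z, c_1, \ldots, c_j, x$ are added as dominating and $w, y$ as isolated, so $C = \{z, c_1, \ldots, c_j, x\}$ is a clique of size $j+2$, $w$ is adjacent to $C\setminus\{z\}$, and $y$ is adjacent only to $x$. In $G'$ the dominating middle vertices are $C' = \{w, c_1, \ldots, c_j, y\}$; the isolated ones are $z$ (adjacent to all of $C'$) and $x$ (with no middle neighbors). In both graphs, every middle vertex is adjacent to the same set of type-$1$ $\rho$-vertices, and every type-$1$ middle vertex is adjacent to every $\sigma$-vertex. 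Let $\pi$ be the vertex permutation swapping $z\leftrightarrow w$ and $x\leftrightarrow y$; crucially, $\pi(C') = C$.

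Writing $M$ for the middle vertex set, the proposed map is
\[
 \phi(A') = \begin{cases} (A'\setminus\{x,y\})\cup\{z,w\} & \text{if } A'\cap M = \{x,y\}, \\ \pi(A') & \text{otherwise.} \end{cases}
\]
To verify $\phi(A')\in\cI(G)$ I would carry out a case analysis on $A'\cap M$, which ranges over the independent subsets of $M$ in $G'$. In either graph an independent set $A$ must satisfy $A\cap\sigma = \emptyset$ whenever $A\cap M$ meets the clique, and $A\cap\rho$ must avoid type-$1$ $\rho$-vertices whenever $A\cap M$ is non-empty; because $\pi(C') = C$, both conditions transfer from $G'$ to $G$ under $\pi$. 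Hence $\pi(A')$ is independent in $G$ in every case except $A'\cap M = \{x,y\}$, which $\pi$ fixes and which is a $G$-edge. For that case the alternative rule replaces $\{x,y\}$ by $\{z,w\}$, which is independent in $G$ and satisfies the same $\sigma$- and $\rho$-constraints (in the new middle $z$ is type-$1$ and $w$ is type-$0$, so $A'_\sigma=\emptyset$ is required just as in the original).

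For injectivity, the special case's image has middle part $\{z,w\}$, and no $\pi$-image can have that middle part since $\pi^{-1}(\{z,w\})=\{z,w\}$ contains the $G'$-edge $zw$ (because $w\in C'$) and so is not independent in $G'$. For non-surjectivity, $\{z,w,y\}$ is independent in $G$ (all three are pairwise non-adjacent) but is not in the image of $\phi$: its $\pi$-preimage $\{z,w,x\}$ again contains the $G'$-edge $zw$, while the special case produces only size-$2$ middle parts. Since $\phi$ is size-preserving, this gives $i_k(G)\ge i_k(G')$ for all $k$ together with $i_3(G) > i_3(G')$, whence $i(G) > i(G')$. I expect the main obstacle to be the $\{x,y\}$ case: the naive ``apply $\pi$ everywhere'' map fails there, and rerouting it to $\{z,w\}$, the unique size-$2$ independent middle set of $G$ that the $\pi$-image misses, is the essential new ingredient.
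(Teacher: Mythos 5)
Your proof is correct and reaches the paper's result by what is at heart the same idea: build a size-preserving, non-surjective injection $\cI(G')\to\cI(G)$ by trading the one bad middle pair for a good one. The difference is a labeling convention. You fix vertex labels to code positions, so the digits of your $x,y,z,w$ change between $G$ and $G'$ and $E(G)\triangle E(G')$ consists of several edges; you compensate with the explicit involution $\pi$ swapping $z\leftrightarrow w$ and $x\leftrightarrow y$. The paper instead lets each label travel with its digit (the convention set up in the $ab$-switch discussion), under which $E(G)=E(G')-yw+xz$ and your $\pi$ becomes the identity; consequently the paper defines its map only on $\cI(G')\setminus\cI(G)$, where the bad pair $\{x,z\}$ is forced, and implicitly fixes $\cI(G)\cap\cI(G')$. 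Your special case $A'\cap M=\{x,y\}$ is exactly the paper's entire domain, your replacement $\{z,w\}$ is the paper's $\{y,w\}$, and your missing witness $\{z,w,y\}$ is the paper's $\{x,y,w\}$ under the change of dictionary. So the route is a faithful variant rather than a genuinely different argument; if anything the paper's convention saves you the $\pi$ bookkeeping, while yours makes the position-swap symmetry visible.
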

\begin{proof}
Let $G = T(\rho101^j01\sigma)$ for $\rho$ and $\sigma$ (possibly empty) binary strings.  Let $G' = T(\rho01^{j+2}0\sigma)$.   By Lemma \ref{abvertedge}, $G'$ has the same number of vertices and edges as $G$. We claim $i(G')<i(G)$.   Figure \ref{fig:Bracketed1String} shows the subgraph of $G'$ induced by the vertices not in $\rho$ or $\sigma$ on the left.  On the right is the subgraph of $G$ induced by the same subset of the vertices.

\begin{figure}[!ht]
\begin{center}
\begin{tikzpicture}
\coordinate (x) at (0,2);
\coordinate (y) at (0,0);
\coordinate (w) at (3,3);
\coordinate (v) at (3,2);
\coordinate (z) at (3,-1);

\draw (3,1.5) ellipse (.75cm and 3cm);
\draw (8,1.5) ellipse (.75cm and 3cm);

\draw (3,3.75) node {$K_{j+2}$};
\draw (8,3.75) node {$K_{j+2}$};
\draw (0,3.75) node {$G'$};
\draw (5,3.75) node {$G$};

\draw (3,1) node {$\vdots$};
\draw (8,1) node {$\vdots$};

\draw (-.5,2) node {$x$};
\draw (-.5,0) node {$y$};
\draw (3.5,3) node {$w$};
\draw (3.5, 2) node {$v$};
\draw (3.5, -1) node {$z$};

\fill (x) circle (3pt);
\fill (y) circle (3pt);
\fill (w) circle (3pt);
\fill (z) circle (3pt);
\fill (v) circle (3pt);

\Edge (y)(z);
\Edge(y)(v);
\Edge(y)(w);

\draw (1.5,-2) node {$0\underbrace{11\cdots 1}_{j+2}0$};

\coordinate (x2) at (5,2);
\coordinate (y2) at (5,0);
\coordinate (w2) at (8,3);
\coordinate (v2) at (8,2);
\coordinate (z2) at (8,-1);

\draw (4.5,2) node {$x$};
\draw (4.5, 0) node {$y$};
\draw (8.5, 3) node {$w$};
\draw (8.5, 2) node {$v$};
\draw (8.5, -1) node {$z$};

\fill (x2) circle (3pt);
\fill (y2) circle (3pt);
\fill (w2) circle (3pt);
\fill (z2) circle (3pt);
\fill (v2) circle (3pt); 

\Edge (y2)(z2);
\Edge(y2)(v2);
\Edge(x2)(z2);

\draw (6.5, -2) node {$10\underbrace{11\cdots 1}_{j}01$};

\end{tikzpicture}
\caption{Removing non-consecutive $0$'s following the first $1$}
\label{fig:Bracketed1String}
\end{center}
\end{figure}
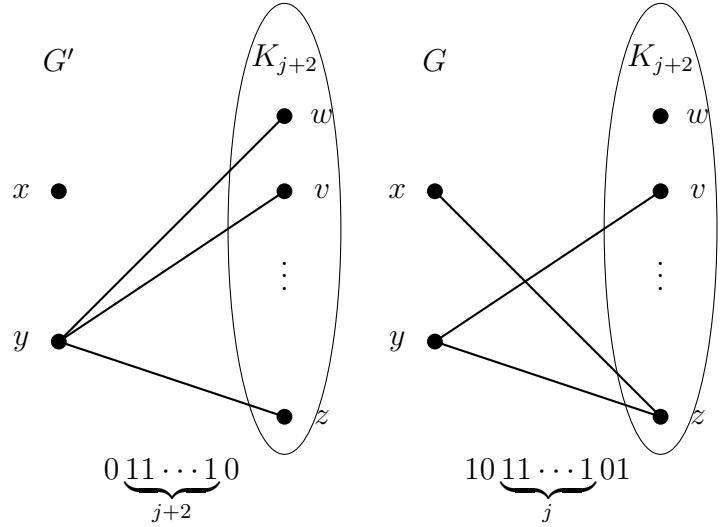

We will define an injection $\phi: \cI(G')\setminus \cI(G)\to \cI(G)\setminus \cI(G')$.  For $A\in \cI(G')\setminus \cI(G)$ define
$$\phi(A) = 
A - \{x,z\} + \{y,w\}.$$

Note that if $A\in \cI(G')\setminus \cI(G)$ then $\{x,z\} \subseteq A$ and $y\notin A$ and $w\notin A$. 
We need to show that $\phi(A) \in \cI(G) \setminus \cI(G')$.  
To show $\phi(A)\in \cI(G)$ we need only show that no vertices in $\phi(A)$ are incident to $y$ or $w$.  
If a vertex $v$ were incident to $y$ then $v$ corresponds to a $1$ to the left of $y$ and hence is incident to $z$.  
Therefore $v\notin A$ and so $v\notin \phi(A)$.  Suppose a vertex $v$ is incident to $w$.  
If $v$ corresponds to a $1$ to the left of $w$ then $v$ is also incident to $x$ and hence can't be in $A$.  
If $v$ is to the right of $w$ then $v$ must be incident to $z$.  
Therefore $\phi(A) \in \cI(G)$.  Since $\{y,w\}\in \phi(A)$ we know $\phi(A) \notin \cI(G')$.  

Clearly  $\phi$ is an injection that preserves size so $i(G') \leq i(G)$ and $i_k(G') \leq i_k(G)$.  Since $\{x,y,w\}$ is not in the image of $\phi$, $i(G')<i(G)$.
\end{proof}

We can use  Lemmas \ref{bracketedstring2}  and \ref{twozeros} to prove Theorem \ref{thm:MinIndSets} which states that $G\in\cT_{n,e}$ maximizes independent sets if and only if $G = \cC(n,e)$.

\begin{proof}[Proof of Theorem \ref{thm:MinIndSets}]
Let $G = T(\sigma)$ be a threshold graph with $n$ vertices and $e$ edges  that is not colex.  By Lemma \ref{colex}, $\sigma$ has more than one $0$ appearing after the initial $1$.  If $\sigma$ has a substring of the form $10^j1$ for some $j\geq 2$ then $G$ does not have the minimum number of independent sets by Lemma \ref{bracketedstring2}.  If $\sigma$ does not have a substring of this form, but does have more than one $0$ appearing after the initial 1 then $\sigma$ has a substring of the form $101^j01$ for some $j\geq 1$ (noting that the last $1$ may be the $*$).  In this case, $G$ does not have the minimum number of independent sets by Lemma \ref{twozeros}.  Therefore, the colex graph attains the minimum number of independent sets among threshold graphs and it is the only graph that does so.

Now we will show that $i_k(\cC(n,e)) \leq i_k(G)$ for all $k\geq 0$.  Among graphs $G$ that maximize $i_k(G)$, choose one with the binary number represented by the code smallest.  Suppose $G$ is not colex.  Since $G$ is not colex, the code of $G$ has more than $0$ appearing after the initial $1$ by Lemma \ref{colex}.  By Lemmas \ref{bracketedstring2} and \ref{twozeros} there exists $G'$ with at most as many $k$-independent sets.  Moreover, the binary number represented by the code of $G'$ in each case is less.  Therefore, $i_k(\cC(n,e))\leq i_k(G)$ for all $G$ with $n$ vertices and $e$ edges.
\end{proof}

\section*{Further Directions}

We have shown that the number of matchings (in total) is maximized by (any) almost alternating threshold graph and no other threshold graphs have the maximum number of matchings.  However, our results do not extend to matchings of size $k$.  This is, in particular, because our local move in Lemma \ref{bracketedstring} strictly increases the number of matchings in total, but does not necessarily strictly increase the number of matchings of size $k$.  Computer testing up to $22$ vertices supports the following conjecture.

\begin{conj}
If $k\geq 0$, $A$ is an almost alternating threshold graph, and $G$ is a threshold graph with the same number of vertices and edges then $m_k(G)\leq m_k(A)$. In addition, if $k\geq 2$, $m_k(A)>0$, and $G$ is not almost alternating then $m_k(G) < m_k(A)$.
\end{conj}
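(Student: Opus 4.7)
The non-strict inequality $m_k(G)\le m_k(A)$ is essentially immediate from the proof of Theorem \ref{thm:MaxMatchings}: both local moves used there, the $ab$-switch of Lemma \ref{abmatchings} and the bracketed-string move of Lemma \ref{bracketedstring}, are realized by injections that preserve matching size. Chaining these along any reduction $G=G_0\to G_1\to\cdots\to G_r=A$ yields $m_k(G)\le m_k(A)$ for every $k\ge 0$.

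For strict inequality, the plan is a proof by contradiction combined with a strengthening of Lemma \ref{bracketedstring}. Suppose $G$ is not almost alternating and $m_k(G)=m_k(A)$ for some $k\ge 2$ with $m_k(A)>0$. By Lemma \ref{almostalternating}, $G$ has a bracketed string or a separation issue. Because $ab$-switches preserve $m_k$ exactly, and the separation-issue reductions in Lemma \ref{separationissue} are chains of $ab$-switches that terminate by producing a bracketed string, the chain must contain at least one bracketed-string step. Since every step is weakly $m_k$-increasing and the sequence $m_k(G_i)$ is constant along the chain and equal to $m_k(A)>0$, every bracketed-string step $G_i\to G_{i+1}$ in this chain has $m_k(G_{i+1})=m_k(G_i)>0$.

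The goal thus reduces to the following strengthening of Lemma \ref{bracketedstring}: \emph{if $G\to G'$ is a bracketed-string move, $k\ge 2$, and $m_k(G')\ge 1$, then $m_k(G')>m_k(G)$.} Partitioning $\cM_k(G)$ and $\cM_k(G')$ according to whether they use the toggled edge $yw$ or $xz$ gives the identity
\[
m_k(G')-m_k(G)=m_{k-1}(G'-x-z)-m_{k-1}(G-y-w).
\]
I would prove the right-hand side is positive by constructing an injection $\psi:\cM_{k-1}(G-y-w)\to\cM_{k-1}(G'-x-z)$ that sends each matching $M$ to the matching obtained by relabeling $x\mapsto y$ and $z\mapsto w$. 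Setting $V^{*}=V(G)\setminus\{x,y,z,w\}$, a direct neighborhood analysis using the threshold structure shows that $z$ in $G-y-w$ and $w$ in $G'-x-z$ have identical $V^{*}$-neighborhoods, while $y$ is adjacent in $G'$ to all $j-2\ge 1$ middle $1$'s of the bracketed $1$-string whereas $x$ is adjacent to none of them in $G$. This makes $\psi$ well-defined and injective, with image missing every matching in $\cM_{k-1}(G'-x-z)$ that contains an edge $yc$ for some middle $1$ $c$.

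The main obstacle is then showing such a matching necessarily exists whenever $m_k(G')\ge 1$. I would attack this with an exchange argument: extract a $(k-1)$-matching $M^{\star}\in \cM_{k-1}(G'-x-z)$ from a $k$-matching of $G'$ (which exists by hypothesis), and then exploit the clique structure of the middle $1$'s together with the fact that $w$ and every middle $1$ dominate all of $V^{*}$ except possibly the isolated $0$'s in $\sigma$, to repeatedly swap edges of $M^{\star}$ until $y$ is matched to a middle $1$. The bracketed $0$-string case is handled symmetrically, and separation issues reduce to the bracketed-string case via Lemma \ref{separationissue}. The exchange step is the real difficulty; it is precisely the gap flagged by the authors in the paragraph preceding the conjecture, since the hypothesis $m_k(G')\ge 1$ does not obviously translate into the residual matchability of $G'-x-z-y-c$ for a middle $1$ $c$, and bookkeeping around the positions of isolated $0$'s in $\sigma$ and trailing digits in $\tau$ is where the argument is most likely to require additional structural input.
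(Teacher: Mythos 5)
The statement you are proving is stated in the paper only as a conjecture, supported by computer search up to $22$ vertices; the authors explicitly remark that their bracketed-string move ``strictly increases the number of matchings in total, but does not necessarily strictly increase the number of matchings of size $k$.'' There is therefore no proof of theirs to compare against, and your proposal must be judged on its own.

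The first clause, $m_k(G)\le m_k(A)$, does follow as you say from the paper's lemmas: Lemma \ref{abmatchings} preserves each $m_k$, the proof of Lemma \ref{bracketedstring} gives $m_k(G)\le m_k(G')$, the reduction chain terminates because the total number of matchings strictly increases at every bracketed-string step, and Corollary \ref{cor:SameNumber} equates $m_k$ across all almost alternating targets. Your reduction of the strict clause to the strengthened local statement ``$m_k(G')\ge 1$ and $k\ge 2$ imply $m_k(G')>m_k(G)$ for a bracketed-string move'' is also the right target, and the identity $m_k(G')-m_k(G)=m_{k-1}(G'-x-z)-m_{k-1}(G-y-w)$ with the relabeling injection $\psi$ correctly isolates the remaining task as exhibiting a $(k-1)$-matching of $G'-x-z$ that matches $y$ to a middle $1$.

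That task is exactly where the proposal stops, and that is the genuine gap --- the same one the authors flag. You describe an exchange argument but do not carry it out, and it is not a one-line verification: one must start from an arbitrary $k$-matching $M$ of $G'$, delete whichever of zero, one, or two edges of $M$ cover $x$ and $z$, then perform up to two further swaps to force $y$ onto a middle $1$, and finally re-pair the displaced partners (and the endpoints $p,q$ freed when $x$ and $z$ were removed) to restore size $k-1$. The cases where the edge covering $z$ is $zy$ or $zc_i$ for a middle $1$ $c_i$, the case where $\sigma$ contains no $1$'s (so $x$'s only non-$z$ neighbour set is empty), and the interaction with the trailing part of the code must each be checked; and the bracketed $0$-string case is \emph{not} literally symmetric --- there $x$ is a dominating vertex rather than a $0$, the middle vertices are $0$'s rather than $1$'s, and $x$'s neighbourhood is large while $y$'s is small --- so that case has to be redone from scratch, not inherited. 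Until that case analysis is actually written down the proposal is a credible outline of a strategy, not a proof; but the architecture (chain argument, $\psi$-injection, reduction to a surplus $(k-1)$-matching through $y$ and a middle $1$) is sound and seems to me the right place to attack the conjecture.
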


\section*{Acknowledgements}

The second author was sponsored by the Simons Foundation under Grant 429383.

\bibliographystyle{amsplain}
\bibliography{MaxMatchings}

\end{document}